\newcommand\CC{{\mathbb C}}
\newcommand\GF{{\mathcal F}}
\newcommand\DD{{\mathbb D}}
\newcommand\GK{{\mathcal K}}
\newcommand\GJ{{\mathcal J}}
\newcommand\GM{{\mathcal M}}
\newcommand\GN{{\mathcal N}}
\newcommand\GP{{\mathcal P}}
\newcommand\RR{{\mathbb R}}
\newcommand\TT{{\mathbb T}}
\newcommand\LH{{\mathcal L}({\mathcal H})}
\newcommand\LJ{{\mathcal L}({\mathcal J})}
\newcommand\LK{{\mathcal L}({\mathcal K})}
\newcommand\GH{{\mathcal H}}
\newcommand\Id{{\rm Id}}
\def\beq{\begin{equation}}
\def\eeq{\end{equation}}
\newtheorem{thm}{Theorem}[section]
\newtheorem{prop}[thm]{Proposition}
\newtheorem{defn}[thm]{Definition}
\newtheorem{lem}[thm]{Lemma}
\newtheorem{cor}[thm]{Corollary}
\newtheorem{rem}[thm]{Remark}
\newtheorem{ex}[thm]{Example}
\newcommand\beginpf{\noindent {\bf Proof:} \quad}
\newcommand\fini{\rightline{$\Box$}}
\newcommand\re{\mathop{\rm Re}\nolimits}
\newcommand\im{\mathop{\rm Im}\nolimits}
\def\beginpf{\begin{proof}}
\def\endpf{\end{proof}}
\newcommand{\tpos}{{t \geqslant 0}}
\def\C{{\mathbb C}}
\def\N{{\mathbb N}}
\def\R{{\mathbb R}}
\newcommand{\B}{\mathcal{B}}
\def\restriction#1#2{\mathchoice
	{\setbox1\hbox{${\displaystyle #1}_{\scriptstyle #2}$}
		\restrictionaux{#1}{#2}}
	{\setbox1\hbox{${\textstyle #1}_{\scriptstyle #2}$}
		\restrictionaux{#1}{#2}}
	{\setbox1\hbox{${\scriptstyle #1}_{\scriptscriptstyle #2}$}
		\restrictionaux{#1}{#2}}
	{\setbox1\hbox{${\scriptscriptstyle #1}_{\scriptscriptstyle #2}$}
		\restrictionaux{#1}{#2}}}
\def\restrictionaux#1#2{{#1\,\smash{\vrule height .8\ht1 depth .85\dp1}}_{\,#2}}
\newcommand{\Hil}{\mathfrak{H}}
\renewcommand\phi{\varphi}
\DeclareMathOperator{\dom}{dom}
\newcommand{{\centering\input{.pstex_t}}}[1]{{\centering\input{#1.pstex_t}}}
\def\Re{\re}
\begin{document}
\title[Universality and models for semigroups of operators]{Universality and models for semigroups of operators on a Hilbert space}

\author{B. C\'elari\`es} 
\address{Benjamin C\'elari\`es, Universit\'e de Lyon, Universit\'e Lyon 1, Institut Camille Jordan, CNRS UMR 5208, 43 bld du 11/11/1918, F-69622 Villeurbanne} 
\email{celaries@math.univ-lyon1.fr}

\author{I. Chalendar}
\address{Isabelle Chalendar, Universit\'e Paris Est Marne-la-Vall\'ee, 5 bld Descartes, Champs-sur-Marne, 77454 Marne-la-Vall\'ee, Cedex 2  (France)}
\email{isabelle.chalendar@u-pem.fr}

\author{J.R.Partington}
\address{Jonathan R. Partington, School of Mathematics, University of Leeds, Leeds LS2 9JT, UK}
 \email{J.R. Partington@leeds.ac.uk}
 
 \subjclass[2010]{47A15, 47D03, 30H10, 31C25}
 %\noindent\textsc{Mathematics Subject Classification} (2000):
 %Primary:  47A15, 47D03
 %Secondary:  30H10, 31C25
 
 \keywords{Universal operator, $C_0$-semigroup, Wold-type decomposition, concave operator, reproducing kernel, Toeplitz operator} 
\baselineskip18pt

\maketitle

\bibliographystyle{plain}

\begin{abstract}
This paper considers universal Hilbert space operators in the sense of Rota, and gives criteria for 
universality of semigroups in the context of uniformly continuous semigroups and contraction semigroups.
Specific examples are given.
Universal semigroups provide models for these classes of semigroups: following a line of research initiated by Shimorin,
models for concave semigroups are developed, in terms of shifts on reproducing kernel Hilbert spaces.
\end{abstract}

\section{Introduction}

In this paper $\GH$ will always  denote a separable infinite-dimensional Hilbert space and $\LH$ the
space of bounded linear operators on $\GH$.

\begin{defn}
An operator $U \in \LH$ is \emph{universal} if, for every $T \in \LH$, there exists  a closed subspace $\mathcal{M}$ of $\GH$ invariant for $U$, a constant $\lambda \in \CC$ and a bounded linear isomorphism $R : \mathcal{M} \rightarrow \GH$ such that 
\[T = \lambda R U_{|\mathcal{M}}R^{-1}.\]
\end{defn}
The concept of a universal operator was introduced by Rota \cite{rota59,rota60} where he showed that the backward shift of infinite multiplicity is an explicit example of such operator. 
The invariant subspace problem provides a  motivation for studying universal operators since every operator has a nontrivial invariant closed subspace if and only if all minimal (with respect to the inclusion) invariant subspaces of any universal operator are of dimension $1$.  
See also \cite[Chap. 8]{CPbook} and \cite{concrete} for further information on this topic. 
More recently, Schroderus and Tylli
 \cite[Thm. 2.2, Cor.2.3]{riikka} have studied universality from the point of view of spectral
 properties of the operator.\\
 
 We first study the Caradus theorem which gives sufficient conditions implying  the universality of an operator. We then introduce the notion of positive universality which is natural in view of producing a consistent definition of universality for a strongly continuous semigroup. \\
 
After an  analysis of a relevant definition for the universality  of a  semigroup, we give a complete answer for uniformly continuous  groups in terms of the universality of the generator. \\   

We then study examples of universal $C_0$-semigroups of contractions and quasicontractions,
and produce a large class of universal semigroups arising from Toeplitz operators with anti-analytic symbol. \\

The  very last section of the paper deals with $C_0$-semigroups which are not quasicontractive. 
Under the conditions of concavity and analyticity, which imply the existence of a Wold-type decomposition, we can provide models for such semigroups.

\section{Universality of an operator}

Surprisingly, there are many universal operators since Caradus gave a large class of operators (defined below) with this property.  
\begin{defn}\label{caradus}
Let $U \in \LH$. We say that $U$ is a {\em Caradus operator} if it satisfies the conditions:
\begin{enumerate}
\item[(i)] $\ker U$ is infinite-dimensional;
\item[(ii)] $U$ is surjective.
\end{enumerate}
\end{defn}
Caradus \cite{caradus} proved that every Caradus operator is universal.

The standard example of a Caradus operator (given by Rota) is the backward
shift of infinite multiplicity, which can also be realised as the backward
shift $S_{1}$ on $L^2(0,\infty)$, defined almost everywhere by
\[
S_{1}f(t)=f(t+1), \qquad (t \ge 0)
\]
for $f \in L^2(0,\infty)$.

The condition that $ \ker U$ is infinite-dimensional is clearly necessary for universality, but
surjectivity is not (as can be seen by taking a direct sum of a universal operator with any
other operator).
However, if $U$ is universal, then   $U_{|\GM}$ is similar to a multiple of the backward
shift  for some invariant
subspace $\GM$, and thus $U$ has a restriction that is a Caradus operator.

The proof of Caradus's theorem in fact shows that Caradus operators have the
formally stronger property of {\rm positive universality}, defined as follows. 
\begin{defn}
An operator $U \in \LH$ is \emph{positively universal} if, for every $T \in \LH$, there exists  a closed subspace $\mathcal{M}$ of $\GH$ invariant for $U$, a constant $\lambda \ge 0$ and a bounded linear isomorphism $R : \mathcal{M} \rightarrow \GH$ such that 
\[T = \lambda R U_{|\mathcal{M}}R^{-1}.\]
\end{defn}

In fact positive universality is equivalent to universality, as the following result shows.

\begin{prop}\label{prop:equiv}
Let $U \in \LH$. Then $U$ is universal if and only if it is positively universal.
\end{prop}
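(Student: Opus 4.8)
The plan is to prove the nontrivial implication ``universal $\Rightarrow$ positively universal'', the converse being immediate since a nonnegative constant is in particular a complex one. The idea is to combine the observation made just above --- that a universal operator has a restriction similar to a nonzero scalar multiple of the backward shift of infinite multiplicity --- with the fact, also recorded above, that Caradus's argument in fact establishes \emph{positive} universality.

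First I would isolate two routine stability properties of positive universality. (a) If $A \in \LH$ is similar to a positively universal operator, then $A$ is positively universal: given $T$, conjugate the model of $T$ by the similarity; invariant subspaces correspond and the constant $\lambda \geqslant 0$ is unchanged. (b) If $A \in \LH$ has a closed invariant subspace $\MM$ (necessarily separable, and in our application infinite-dimensional) such that $A_{|\MM}$ is positively universal, then $A$ is positively universal: fix a unitary $V : \GH \to \MM$, apply the positive universality of $A_{|\MM}$ to $V T V^{-1} \in \mathcal{L}(\MM)$, and note that an $A_{|\MM}$-invariant subspace of $\MM$ is automatically $A$-invariant, so that the resulting model of $T$ (after conjugating back by $V$) is a model through $A$ with a nonnegative constant.

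Now suppose $U$ is universal, and let $B \in \LH$ denote the backward shift of infinite multiplicity, realized on $\GH$. Applying the definition of universality to $T = B$ yields a closed $U$-invariant subspace $\MM_0$, a scalar $\mu \in \CC$, and an isomorphism $R_0 : \MM_0 \to \GH$ with $B = \mu R_0 U_{|\MM_0} R_0^{-1}$; since $B \neq 0$ we have $\mu \neq 0$, and therefore $U_{|\MM_0}$ is similar to $\tfrac{1}{\mu} B$. The crucial point is that $\tfrac{1}{\mu} B$ is \emph{still} a Caradus operator: it has the same infinite-dimensional kernel as $B$ and is still surjective. Hence, by Caradus's theorem in its sharper (positive) form, $\tfrac{1}{\mu} B$ is positively universal; by (a), $U_{|\MM_0}$ is positively universal; and by (b), $U$ is positively universal.

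The only place where care is needed --- and the reason the forward implication is not completely formal --- is the complex scalar $\mu$ produced by the universality of $U$: composing the similarity model of $B$ through $U_{|\MM_0}$ with a model of an arbitrary $T$ through $B$ would, done naively, leave a net scalar $\lambda\mu$ with $\mu$ merely complex. Routing the argument through the positive universality of $\tfrac{1}{\mu} B$ itself --- which is legitimate precisely because a nonzero scalar multiple of a Caradus operator is again Caradus --- keeps $\mu$ inside an operator already known to be positively universal, so the only surviving constant is the nonnegative one delivered by that operator.
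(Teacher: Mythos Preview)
Your argument is correct and follows essentially the same route as the paper's: both proofs apply the universality of $U$ to the backward shift $B$, observe that the resulting restriction $U_{|\GM_0}$ is similar to a nonzero scalar multiple of $B$, and then exploit the key fact that any nonzero scalar multiple of a Caradus operator is again Caradus (hence positively universal) to absorb the complex phase before composing models. The paper writes this slightly differently --- factoring $\alpha = |\alpha|\cdot(\alpha/|\alpha|)$ and calling $W=(\alpha/|\alpha|)B$ the new positively universal operator --- but the substance is identical; your explicit isolation of the two stability properties (a) and (b) simply makes the bookkeeping of invariant subspaces and isomorphisms more transparent than the paper's compressed composition $U_{|\GP}=\lambda|\alpha|R^{-1}Q^{-1}TQR$.
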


\beginpf
Let $V \in \LJ$ be an arbitrary positively universal operator as given by Cara\-dus's theorem,
e.g. the backward shift on $L^2(0,\infty)$, so that $\alpha V$ is also 
positively universal if $\alpha \in \CC \setminus \{0\}$.

Now there is an invariant subspace $\GM$ for $U$, and $\alpha \in \CC \setminus \{0\}$, so that we can write
$U_{|\GM} = \alpha R^{-1} VR$ with $R:\GM \to \GJ$ an isomorphism.
Then $U_{|\GM}= |\alpha| R^{-1} WR$, where $W=\alpha V/|\alpha|$, which is positively universal.

Finally, if $T \in \LK$ is any operator, then we can write $W_{|\GN}=\lambda Q^{-1}  TQ$, where $\GN$ is invariant for $W$, $Q: \GN \to \GK$ is an isomorphism,
and $\lambda>0$.

So $U_{|\GP} = \lambda |\alpha| R^{-1}Q^{-1} T Q R$, where $\GP=R^{-1}(\GN)$ is invariant for $U$, and so
$U$ is positively universal.
\endpf

It was shown by Rota \cite{rota60} that the backward shift $S_1$ of infinite multiplicity
also has the property of 1-universality for all operators $T \in \LH$ of spectral radius
strictly less than 1; that is, such an operator can be written as 
\[
T= R S_{1}{}_{|\GM} R^{-1},
\]
where $\GM$ is an invariant subspace for $S_1$ and $R: \GM \to \GH$ is an isomorphism.
Another famous example of a universal operator is due to E. Nordgren, P. Rosenthal and F. Wintrobe \cite{NRW} who 
proved that $C_\varphi-\Id$ is universal on the Hardy space $H^2(\DD)$, with $\varphi$ is a hyperbolic automorphism of the unit disc. E. Pozzi \cite{PP11,elodie} studied universal shifts and weighted composition operators on various spaces, and C. Cowen and   E. Gallardo-Guti\'errez produced examples of universal anti-analytic Toeplitz operators \cite{concrete}.

 \section{Universal semigroups}
 \subsection{Basic facts on semigroups}
 A family $(T_t)_{t\geq 0}$ in  $\LH$ is called a \emph{$C_0$-semigroup} if 
 \begin{center}
$T_0=\Id$, $T_{t+s}=T_tT_s$ for all $s,t\geq 0$ and $\forall x\in\GH$, $\lim_{t\to 0}T_t x=x$. 
\end{center}
A \emph{uniformly continuous semigroup} is a $C_0$-semigroup such that 
 \[\lim_{t\to 0}\|T_t-\Id\|=0.\]
 Recall also that the generator of a $C_0$-semigroup denoted by $A$ is defined by 
 \[Ax=\lim_{t\to 0}\frac{T_tx-x}{t}\]
 on $D(A):=\{x:\lim_{t\to 0}\frac{T_tx-x}{t}\mbox{ exists}\}$. Moreover $(T_t)_{t\geq 0}$ is uniformly continuous if and only if $D(A)=\GH$ , that is, if and only if $A\in\LH$.  See for example \cite{EN} for an introduction to $C_0$-semigroups.
 
 Since a $C_0$-semigroup $(T_t)_{t\geq 0}$ is not always uniformly continuous, its generator $A$ is 
in general  an unbounded  operator. 
Nevertheless, provided that $1$ is not in the spectrum of $A$, the (negative) Cayley transform of $A$ defined by $V:=(A+\Id)(A-\Id)^{-1}$ is a bounded operator and is called the \emph{cogenerator} of $(T_t)_{t\geq 0}$.    
 In \cite[Thm III.8.1]{nagy-foias} the following equivalence is proved:
 \begin{center}
 	$V\in \LH$ is the cogenerator of a $C_0$-semigroup of contractions if and only if $V$ is a contraction and $1$ is not an eigenvalue of $V$. 
 \end{center}
 Not only contractivity is preserved by the cogenerator. Indeed, Sz.-Nagy and Foias \cite[Prop. 8.2]{nagy-foias} proved that a $C_0$-semigroup of contractions consists of normal,  self-adjoint, or unitary operators, if and only if its cogenerator is normal, self-adjoint, or unitary, respectively.     
 
 \subsection{Definitions of universality for semigroups}
 Let $(S_t)_\tpos$ be the $C_0$-semigroup on $L^2([0,+\infty))$ such that for all $\tpos$, \[S_t: \left\lbrace \begin{array}{rll}
 L^2([0;+ \infty)) & \rightarrow & L^2([0+ \infty)) \\ 
 f & \mapsto & f(\cdot + t)
 \end{array}  \right. .\]  
 
 Then for any $t>0$,  by Caradus' theorem, $S_t$ is universal.\\ 
 
 Therefore, for any $C_0$-semigroup $(T_t)_\tpos$ on $L^2([0,+\infty))$, there exist sequences  $({\mathcal M}_t)_t$ of closed subspaces of $L^2([0,+\infty))$, $(\lambda_t)_t$ of complex numbers and $(R_t)_t$ of bounded isomorphisms from ${\mathcal M}_t$ onto $L^2([0,\infty))$ such that, for every $t>0$, 
 \[  T_t=\lambda_t R_t(S_t)_{|{\mathcal M}_t}R_t^{-1}.  \]
 
 This possible definition of universal semigroups is not fully satisfactory since $\lambda_t$, $\mathcal{M}_t$, and $R_t$ depend  heavily on $t$. 
 
 A much more natural and appropriate definition is the following. 
 
 \begin{defn}
 	Let $(U_t)_\tpos$ be a $C_0$-semigroup (resp. uniformly continuous) on a Hilbert space $\GH$. It is called a \emph{universal} $C_0$-semigroup (resp. uniformly continuous) if for every $C_0$-semigroup $(T_t)_\tpos$ (resp. uniformly continuous), there exist a closed subspace $\mathcal M$ invariant by every $(U_t)_\tpos$, $\lambda \in \R$, $\mu \in \R^{+*}$, and $R:{\mathcal M}\to  \GH$ a bounded isomorphism such that, for all $\tpos$: 
 	\[ T_t  =R(e^{\lambda t}U_{\mu t})_{|{\mathcal M}}R^{-1}.   \]  	
 \end{defn}
Using this definition of universality for semigroups,  
 a certain amount of caution is required: 
for the backward shift semigroup on $L^2(0,\infty)$ each $S_t$ is universal, but the semigroup as a whole is
not,
as we shall see later. 
 
 \subsection{Uniformly continuous groups}
 It is very natural to find  a criterion involving the generator which captures all the information pertaining to the semigroups. The easiest case to deal with is when the semigroup is uniformly continuous since its generator is bounded. In this
 situation the semigroup extends to a 
group parametrised by $\RR$.  
 \begin{thm}\label{thm:lambdazero}
	Let $(U_t)_{t \in \RR}$ be a uniformly continuous  group whose (bounded) generator is denoted by  $A$. The following assertions are equivalent:
	\begin{enumerate}
		\item[(i)] for every  uniformly continuous group $(T_t)_{t \in \RR}$, there exists a closed subspace $\mathcal M$ invariant for $(U_t)_{t \in \RR} $, $\mu \ge 0$, and $R:{\mathcal M}\to  \GH$ a bounded isomorphism such that, for all $t \in \RR$: 
\[ T_t  =R U_{\mu t}{}_{|\GM}R^{-1}.   \] 
		\item[(ii)] $A$ is universal.
	\end{enumerate}
\end{thm}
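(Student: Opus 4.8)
The plan is to exploit the one-to-one correspondence between a uniformly continuous group on $\GH$ and its bounded generator, writing $U_t = e^{tA}$ and, for an arbitrary uniformly continuous group $(T_t)_{t\in\RR}$, $T_t = e^{tB}$ with $A,B\in\LH$; the two implications then become ``differentiate at $t=0$'' and ``exponentiate'', and the only extra input is Proposition~\ref{prop:equiv}, which lets us trade universality for positive universality and thereby obtain a \emph{nonnegative real} scaling constant rather than a complex one. Throughout I would use the elementary fact that, for a \emph{closed} subspace $\GM\subseteq\GH$, invariance under $A$ is equivalent to invariance under every $U_t = e^{tA} = \sum_n t^nA^n/n!$ (truncate the series, use that each $A^n$ preserves $\GM$, pass to the limit in the closed subspace), in which case $(U_t)_{|\GM} = e^{tA_{|\GM}}$ with $A_{|\GM}\in\mathcal L(\GM)$.

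For (ii)$\Rightarrow$(i): assuming $A$ universal, hence positively universal by Proposition~\ref{prop:equiv}, and given any uniformly continuous group $(T_t)_{t\in\RR}$ with bounded generator $B$, I would apply positive universality to the single operator $B$. This yields a closed subspace $\GM$ invariant for $A$, a constant $\mu\ge 0$, and a bounded isomorphism $R:\GM\to\GH$ with $B = \mu R\,A_{|\GM}\,R^{-1}$. By the observation above, $\GM$ is invariant for the whole group $(U_t)_{t\in\RR}$; and since conjugation and scalar multiplication commute with the exponential series, $T_t = e^{tB} = R\,e^{(\mu t)A_{|\GM}}\,R^{-1} = R\,U_{\mu t}{}_{|\GM}\,R^{-1}$ for all $t\in\RR$, which is exactly assertion (i). It is essential here that $\mu$ be a nonnegative real number, which is precisely what positive universality (rather than bare universality) provides, because the group parameter is real.

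For (i)$\Rightarrow$(ii): given an arbitrary $T\in\LH$, I would feed the uniformly continuous group $(e^{tT})_{t\in\RR}$, whose generator is $T$, into hypothesis (i). This produces a closed subspace $\GM$ invariant for $(U_t)_{t\in\RR}$, a constant $\mu\ge 0$, and a bounded isomorphism $R:\GM\to\GH$ with $e^{tT} = R\,U_{\mu t}{}_{|\GM}\,R^{-1}$ for all $t\in\RR$. Since $\GM$ is closed and invariant for every $U_t$, the difference quotients $(U_tx-x)/t$ lie in $\GM$ for $x\in\GM$, so $\GM$ is invariant for $A$; and differentiating the displayed identity at $t=0$ in operator norm (equivalently, identifying the generators of the two uniformly continuous groups) gives $T = \mu R\,A_{|\GM}\,R^{-1}$. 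As $\mu\ge 0$ is in particular a complex number and $R$ is an isomorphism, this is precisely the statement that $A$ is universal (indeed positively universal).

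I do not expect a genuine obstacle: the argument is short, and the steps needing a line of justification are all routine — the equivalence ``$\GM$ closed invariant for $A$'' $\iff$ ``$\GM$ invariant for $(e^{tA})_{t\in\RR}$'', the norm-differentiability of $t\mapsto e^{tT}$ and of $t\mapsto U_{\mu t}{}_{|\GM}$, and the bookkeeping that exponentiation turns similarity and positive scaling of generators into similarity and time-rescaling of groups. The one conceptual point worth isolating is why, for groups, the factor $e^{\lambda t}$ in the general definition of a universal semigroup can be dropped: operator universality expresses $B$ as a pure conjugate-and-scale of $A_{|\GM}$, with no additive multiple of the identity, so exponentiating produces no scalar prefactor.
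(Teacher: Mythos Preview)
Your proposal is correct and follows essentially the same approach as the paper: differentiate at $t=0$ for (i)$\Rightarrow$(ii), and invoke Proposition~\ref{prop:equiv} then exponentiate for (ii)$\Rightarrow$(i). You supply a bit more detail than the paper does on the invariance of $\GM$ under $A$ and on the identity $(U_t)_{|\GM}=e^{tA_{|\GM}}$, but the argument is the same.
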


\beginpf
$(i)\Rightarrow (ii)$: Let $B$ be a bounded operator on $\GH$ and  $(T_t)_{t \in \RR}$ be the uniformly continuous semigroup generated by 
$B$. Let ${\mathcal M}$ be a closed subspace of $\GH$, $\mu\ge  0$ and $R:{\mathcal M}\to \GH$ an isomorphism such that 
\[  T_t=R(U_{\mu t})_{|{\mathcal M}}R^{-1}.  \]  
For all $x\in \GH$, we can differentiate $\varphi:t\mapsto T_tx$ at $t=0$ and we get:
\[  Bx=R(\mu  A_{|{\mathcal M}} )R^{-1}x,  \]
which proves that $B$ is   universal. \\
$(ii)\Rightarrow (i)$:  Let $(T_t)_{t \in \RR}$ be a uniformly continuous semigroup whose generator is denoted by $B$. Since $A$ is positively universal by Prop. \ref{prop:equiv}, there exist     a closed subspace ${\mathcal M}$ of $\GH$, $\mu \ge 0$ and $R:{\mathcal M}\to \GH$ an isomorphism such that 
\begin{equation}\label{eq:BA}
B=\mu R(A_{|{\mathcal M}})R^{-1}. 
 \end{equation}  
It follows that, for all ${t \in \RR}$, 
\[e^{tB} = R(e^{\mu tA})_{|{\mathcal M}}R^{-1},\] 
and then $T_t=R(U_{\mu t})_{|{\mathcal M}} R^{-1}$.  
\endpf
 
 \begin{ex}
 Take $A=S_1$. To calculate the semigroup $(U_t)_{t \in \RR}$ it is convenient to
 work with the Fourier transform $\GF$, which, by the Paley--Wiener theorem \cite{rudin2}
 provides an isometric isomorphism between
 $L^2(0,\infty)$ and the Hardy space $H^2(\CC^+)$ of the upper half-plane $\CC^+$.
 Then $S_1^*$ is the right shift by 1 on $L^2(0,\infty)$, and the 
 operator $\GF S_1^* \GF^{-1}$ is the analytic Toeplitz operator with symbol $z \mapsto e^{iz}$.
 
 That is, for $t \in \RR$, $\GF U^*_t  \GF^{-1}$ is the analytic Toeplitz operator with symbol $x\mapsto \exp(t e^{ix})$, where $x \in \RR$,
 and $\GF U_t  \GF^{-1}$ is the anti-analytic Toeplitz operator with symbol
 $x \mapsto \exp(t e^{-ix})$.
 \end{ex}
 
 Note that the shift semigroup $(S_t)_{t \ge 0}$ on $L^2(0,\infty)$ is not universal even for the class
 of all uniformly continuous contraction semigroups.
 Its infinitesimal generator $A$ is  defined by $Af=f'$ and hence $\ker (A-\lambda I)$
 has dimension at most 1 for every $\lambda \in \CC$. Hence if $B$ is a non-zero bounded
 operator with kernel of dimension  at least 2, then we cannot have an identity of the form
$ B-\lambda I=\mu R(A_{|{\mathcal M}})R^{-1}$, 
 and so we do not have an identity of the
 form 
 $e^{tB} = e^{\lambda t}R(S_{\mu t})_{|{\mathcal M}}R^{-1}$.
 
 \subsection{Contraction semigroups}

 %Cogenerators, see \cite{EZ}.
 
 Note that a subspace $\GM$ is invariant for the cogenerator if and only
 if it is invariant for every member of the semigroup \cite{fuhrmann}.
 
 The following theorem \cite[Thm. 8.1.5]{CPbook} can be traced back to \cite{nagy-foias}.
Recall that an operator $T \in \LH$ is said to be $C_{0.}$ if $\|T^n x\| \to 0$ for all $x \in \GH$.
 
 \begin{thm}
 Let $T \in \LH$ be a contraction operator of class $C_{0.}$. Then there
 is an invariant subspace $\GM$ of $S_1$ such that $T$ is unitarily
 equivalent to $S_1{}_{|\GM}$.
 \end{thm}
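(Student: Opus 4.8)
\medskip
\noindent\textbf{Plan of proof.}
The plan is to manufacture, directly out of $T$, an isometric copy of $\GH$ sitting inside a vector-valued sequence space on which a backward shift acts, and then to recognise that backward shift as the restriction of $S_1$ to an invariant (in fact reducing) subspace; recall that $S_1$ is the backward shift of \emph{infinite} multiplicity.

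First I would bring in the defect operator $D_T := (\Id - T^*T)^{1/2} \ge 0$ together with the defect space $\mathcal{D}_T := \overline{D_T\GH}$, and record the elementary identity
\[
\|D_T x\|^2 = \langle (\Id - T^*T)x, x\rangle = \|x\|^2 - \|Tx\|^2, \qquad x \in \GH .
\]
Applying it to $T^k x$ and telescoping gives $\|x\|^2 = \|T^n x\|^2 + \sum_{k=0}^{n-1}\|D_T T^k x\|^2$ for every $n \ge 1$, and here the hypothesis that $T$ is of class $C_{0.}$ enters in an essential way: since $\|T^n x\| \to 0$, we obtain $\|x\|^2 = \sum_{k=0}^{\infty}\|D_T T^k x\|^2$ for all $x$. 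Consequently the map $W : \GH \to \bigoplus_{k=0}^{\infty}\mathcal{D}_T$ (the $\ell^2$-direct sum) given by $Wx := (D_T T^k x)_{k \ge 0}$ is a well-defined linear isometry. Letting $B$ denote the backward shift $(a_0, a_1, a_2, \dots) \mapsto (a_1, a_2, \dots)$ on $\bigoplus_{k\ge 0}\mathcal{D}_T$ (a backward shift of multiplicity $\dim\mathcal{D}_T$), one checks at once that $WTx = (D_T T^{k+1}x)_{k\ge 0} = B(Wx)$, so $WT = BW$. Hence $W\GH$ is invariant under $B$, it is closed because $W$ is isometric and $\GH$ complete, and $W$ is a unitary from $\GH$ onto $W\GH$ that conjugates $T$ into $B|_{W\GH}$; so $T$ is unitarily equivalent to $B|_{W\GH}$.

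The remaining task is to display $B$ as a piece of $S_1$. Identifying $L^2(0,\infty)$ with $\bigoplus_{k\ge 0}\GK$, where $\GK := L^2(0,1)$, via $f \mapsto (f(\cdot + k)|_{(0,1)})_{k \ge 0}$, the operator $S_1$ becomes exactly the backward shift on $\bigoplus_{k\ge 0}\GK$. As $\mathcal{D}_T$ is separable, $\dim\mathcal{D}_T \le \aleph_0 = \dim\GK$, so we may fix an isometric inclusion $\mathcal{D}_T \hookrightarrow \GK$; it induces an isometry $\iota : \bigoplus_{k\ge 0}\mathcal{D}_T \hookrightarrow \bigoplus_{k\ge 0}\GK$ which intertwines the two backward shifts and whose range is a reducing --- in particular invariant --- subspace for the backward shift on $\bigoplus_{k\ge 0}\GK$. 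Putting $\GM := \iota(W\GH)$, we obtain an $S_1$-invariant subspace $\GM$ of $L^2(0,\infty)$ and a unitary $\iota W : \GH \to \GM$ satisfying $(\iota W)\, T = S_1|_{\GM}\,(\iota W)$, i.e.\ $T$ is unitarily equivalent to $S_1|_{\GM}$. (If $\mathcal{D}_T = \{0\}$ then $T$ is an isometry, so $\|T^n x\| = \|x\| \to 0$ forces $\GH = \{0\}$ and the statement is trivial.)

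I do not expect a serious obstacle: the argument is essentially formal, being the isometric sharpening of Rota's similarity model, and it already underlies the Sz.-Nagy--Foias result cited above. The two points that do need attention are the use of the $C_{0.}$ hypothesis --- it is precisely what upgrades the contraction $W$ into an isometry, by forcing the defect series to sum to $\|x\|^2$ --- and the multiplicity bookkeeping in the last step, which is painless because \emph{any} subspace of the coefficient space $\GK$ generates a reducing subspace for the backward shift, so no matching of dimensions beyond the trivial inequality $\dim\mathcal{D}_T \le \dim\GK$ is required.
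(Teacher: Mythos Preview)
Your argument is correct and is essentially the classical construction underlying Rota's theorem and the Sz.-Nagy--Foias functional model. Note, however, that the paper does not actually prove this statement: it is quoted from \cite[Thm.~8.1.5]{CPbook}, with attribution to \cite{nagy-foias}, and used as a black box to derive the subsequent semigroup result. Thus there is no in-paper proof to compare against; your sketch supplies exactly the standard argument those references contain --- the defect-operator isometry $x \mapsto (D_T T^k x)_{k\ge 0}$ into $\bigoplus_{k\ge 0}\mathcal{D}_T$, with the $C_{0.}$ hypothesis ensuring it is genuinely isometric rather than merely contractive, followed by an embedding of $\mathcal{D}_T$ into $L^2(0,1)$ to realise everything inside $L^2(0,\infty)$.
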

 
 This easily implies the following result.

 \begin{thm}\label{thm:S1univ}
 Let $(U_t)_{t \ge 0}$ be the semigroup on 
 $\GH=L^2(0,\infty)$ whose cogenerator is $S_1$.
 Then for every $C_{0.}$ contraction semigroup $(T(t))_{t \ge 0}$ 
 on a Hilbert space $\GH$
 there is a common invariant subspace $\GM$ for $(U_t)_{t \ge 0}$ 
 and an isomorphism $R: \GM \to \GH$ such that
 $T(t)=R U_t{}_{|\GM} R^{-1}$ for all $t \ge 0$.
 \end{thm}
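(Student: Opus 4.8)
\beginpf
The plan is to move everything to the level of cogenerators and then invoke the model theorem for $C_{0\cdot}$ contractions quoted above. Write $\GK$ for the Hilbert space carrying the given semigroup $(T(t))_{t\ge 0}$, let $V$ be its cogenerator, and recall that, by hypothesis, $S_1$ is the cogenerator of $(U_t)_{t\ge 0}$. The transfer rests on: the bijective correspondence between $C_0$-semigroups of contractions and contractions $V$ with $1$ not an eigenvalue of $V$ (a semigroup being sent to its cogenerator), see \cite[Thm.~III.8.1]{nagy-foias}; the invariance equivalence recalled just before the statement, namely that a closed subspace is invariant for every $U_t$ if and only if it is invariant for $S_1$ \cite{fuhrmann}; and the quoted theorem that a $C_{0\cdot}$ contraction is unitarily equivalent to $S_1$ restricted to one of its invariant subspaces.

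First I would check that $V$ is a $C_{0\cdot}$ contraction: it is a contraction on $\GK$ with $1$ not an eigenvalue by \cite[Thm.~III.8.1]{nagy-foias}, and the $C_{0\cdot}$ property passes from $(T(t))_{t\ge 0}$ to its cogenerator $V$ (a standard property of the cogenerator correspondence; see \cite{nagy-foias}). Next I would apply the quoted theorem to $V$: there is a closed subspace $\GM$ of $L^2(0,\infty)$ invariant for $S_1$, and a unitary $R\colon\GM\to\GK$, with $V = R\,(S_1{}_{|\GM})\,R^{-1}$; by the invariance equivalence, $\GM$ is then invariant for every $U_t$. I would then identify $S_1{}_{|\GM}$ as the cogenerator of the restricted semigroup $(U_t{}_{|\GM})_{t\ge 0}$: writing $A$ for the generator of $(U_t)_{t\ge 0}$, one has $(A-\Id)^{-1} = \tfrac12(S_1-\Id)$, which leaves the $S_1$-invariant subspace $\GM$ invariant, so the generator of $(U_t{}_{|\GM})_{t\ge 0}$ is the part of $A$ in $\GM$, and a one-line computation of its Cayley transform returns $S_1{}_{|\GM}$ (in particular $1$ is not one of its eigenvalues). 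Finally, $(R\,U_t{}_{|\GM}\,R^{-1})_{t\ge 0}$ is a $C_0$-semigroup of contractions on $\GK$ whose cogenerator is $R\,(S_1{}_{|\GM})\,R^{-1} = V$, the cogenerator of $(T(t))_{t\ge 0}$; by the uniqueness half of the correspondence, $T(t) = R\,U_t{}_{|\GM}\,R^{-1}$ for all $t\ge 0$, which is the assertion (and $R$ may in fact be taken unitary).

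The one genuinely non-formal ingredient — and hence the step to be most careful about — is that the $C_{0\cdot}$ property of the contraction semigroup is inherited by its cogenerator. Everything else is bookkeeping: how the Sz.-Nagy--Foias correspondence interacts with restriction to a common invariant subspace and with conjugation by an isomorphism, together with the invariant-subspace equivalence for the cogenerator, all available from \cite{nagy-foias} and \cite{fuhrmann}.
\endpf
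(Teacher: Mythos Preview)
Your proof is correct and follows essentially the same route as the paper's own argument: pass to the cogenerator $V$ of $(T(t))_{t\ge 0}$, use that $V$ is a $C_{0\cdot}$ contraction (the paper cites \cite[Sec.~III.8--9]{nagy-foias} for this), apply the quoted model theorem to obtain $V=R\,S_1{}_{|\GM}\,R^{-1}$, and then transfer back to the semigroups. The paper compresses your steps (4)--(7) into the phrase ``standard calculations''; you have simply spelled out what those calculations are, including the identification of $S_1{}_{|\GM}$ as the cogenerator of $(U_t{}_{|\GM})_{t\ge 0}$ and the appeal to uniqueness of the cogenerator correspondence.
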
 
 
 \beginpf
 Consider the cogenerator $W$ of $(T(t))_{t \ge 0}$. This is a $C_{0.}$ contraction,
 by \cite[Sec. III.8--9]{nagy-foias}, and thus can be written as
 $W=R S_1{}_{|\GM} R^{-1}$ for some invariant subspace $\GM$ of $S_1$ and 
 isomorphism $R: \GM \to \GH$. 
 The result then follows by standard calculations.
 \endpf
 
 This semigroup can also be expressed using co-analytic Toeplitz
 operators on the Hardy space $H^2(\CC^+)$. For, with $\GF$
 denoting the Fourier transform once more, we have
 $\GF S_1^* \GF^{-1}$ is the multiplication operator (analytic Toeplitz operator)
 with symbol $e^{iz}$, and thus $\GF U^*_t \GF^{-1}$ has symbol
 \[
 \exp(t (e^{iz}+1)/(e^{iz} -1))= \exp(-it \cot (z/2)).
 \]
 
 If a semigroup $(U_t)_{t \ge 0}$ is quasicontractive, i.e.,
 it satisfies $\|U(t)\| \le e^{\omega t}$ for some $\omega \in \RR$, then
 clearly $( e^{-\lambda t}U(t))_{t \ge 0}$ is a $C_{0.}$ contractive
 semigroup provided that $\lambda > \omega$. We therefore
 have the following corollary.
 
 \begin{cor} 
 Let $(U_t)_{t \ge 0}$ be the semigroup on 
 $\GH=L^2(0,\infty)$ whose cogenerator is $S_1$.
 Then for every   quasicontractive semigroup $(T(t))_{t \ge 0}$ 
 on a Hilbert space $\GH$
 there is a common invariant subspace $\GM$ for $(U_t)_{t \ge 0}$, a constant $\lambda \in \RR$,
 and an isomorphism $R: \GM \to \GH$ such that
 $T(t)=e^{\lambda t} R U_t{}_{|\GM} R^{-1}$ for all $t \ge 0$.
 \end{cor}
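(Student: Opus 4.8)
The plan is to deduce this directly from Theorem~\ref{thm:S1univ} by the rescaling argument sketched in the paragraph preceding the statement. Suppose $(T(t))_{t\ge 0}$ is quasicontractive, say $\|T(t)\|\le e^{\omega t}$ for all $t\ge 0$ and some $\omega\in\RR$. Fix any $\lambda>\omega$ and set $V(t):=e^{-\lambda t}T(t)$ for $t\ge 0$.

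The first step is to check that $(V(t))_{t\ge 0}$ is a $C_0$-semigroup of contractions of class $C_{0.}$. The identities $V(0)=\Id$ and $V(t+s)=V(t)V(s)$ are immediate, and strong continuity at $0$ is inherited from $(T(t))$ since $t\mapsto e^{-\lambda t}$ is continuous with value $1$ at $0$; contractivity is the estimate $\|V(t)\|\le e^{(\omega-\lambda)t}\le 1$, which holds because $\omega-\lambda<0$ and $t\ge 0$. The same estimate gives $\|V(t)x\|\le e^{(\omega-\lambda)t}\|x\|\to 0$ as $t\to\infty$ for every $x\in\GH$, which is exactly the $C_{0.}$ condition for a semigroup.

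The second step is to apply Theorem~\ref{thm:S1univ} to $(V(t))_{t\ge 0}$: it produces a common invariant subspace $\GM$ for $(U_t)_{t\ge 0}$ and an isomorphism $R:\GM\to\GH$ such that $V(t)=R\,U_t{}_{|\GM}\,R^{-1}$ for all $t\ge 0$. Substituting back $V(t)=e^{-\lambda t}T(t)$ and multiplying through by $e^{\lambda t}$ yields $T(t)=e^{\lambda t}R\,U_t{}_{|\GM}\,R^{-1}$, which is the claimed conclusion.

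There is no genuine obstacle here; the only points needing (routine) attention are that rescaling by $e^{-\lambda t}$ preserves the $C_0$-semigroup structure, and that the \emph{strict} inequality $\lambda>\omega$ is what simultaneously forces contractivity of $(V(t))$ and the decay $\|V(t)x\|\to 0$ required to invoke Theorem~\ref{thm:S1univ}.
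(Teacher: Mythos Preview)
Your proof is correct and follows exactly the approach the paper indicates: rescale by $e^{-\lambda t}$ with $\lambda>\omega$ to obtain a $C_{0.}$ contraction semigroup, then apply Theorem~\ref{thm:S1univ}. The verifications you add (that rescaling preserves the $C_0$-semigroup property and that the strict inequality forces both contractivity and the $C_{0.}$ decay) are routine but accurate, and nothing further is needed.
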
 
 
 Note that the backward shift semigroup $(\widetilde S_t)_{t \ge 0}$ on $L^2(0,\infty; \GH)$
 is also universal in this sense: 
 see \cite[Thm. 10-18]{fuhrmann}. Note that the example in Theorem \ref{thm:S1univ}
 is defined on the simpler space $L^2(0,\infty)$.

%{\bf Discussion of unitary semigroups. No universality in this case unless absolutely continuous???
%How about groups???}

The operator $S_1$ is the adjoint of a completely non-unitary unilateral right shift of infinite multiplicity.
There are many Toeplitz operators that are unitarily equivalent to it, and thus have
similar properties.

The following result is well-known, and we give a simple proof to illustrate it. We shall perform
calculations on the Hardy space $H^2(\DD)$ of the disc, but analogous results hold for Hardy spaces of the half-plane.

\begin{lem}
Let $\phi$ be an inner function. Then the analytic Toeplitz operator $T_\phi$ is unitarily 
equivalent to a unilateral right shift of multiplicity $\dim K_\phi$, where $K_\phi= H^2 \ominus \phi H^2$.
\end{lem}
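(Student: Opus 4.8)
The plan is to exhibit an explicit unitary intertwining $T_\phi$ with a shift. First I would recall the Wold-type decomposition for the isometry $T_\phi$ on $H^2(\DD)$: since $\phi$ is inner, $T_\phi$ is an isometry (it is multiplication by a unimodular-boundary-values function), so $H^2 = \left(\bigoplus_{n\ge 0} T_\phi^n \mathcal{W}\right) \oplus \left(\bigcap_{n\ge 0} T_\phi^n H^2\right)$, where the wandering subspace is $\mathcal{W} = H^2 \ominus T_\phi H^2 = H^2 \ominus \phi H^2 = K_\phi$. The first key step is therefore to check that $T_\phi$ is \emph{pure}, i.e. $\bigcap_{n\ge 0} \phi^n H^2 = \{0\}$: if $f$ lies in every $\phi^n H^2$ then $|f| \le \|f/\phi^n\|$-type growth forces, via the inner-outer factorisation or simply because $\phi^n \to 0$ pointwise on $\DD$ while $f/\phi^n \in H^2$ has controlled norm, that $f = 0$. (Concretely, $\|f\|_2 = \|f/\phi^n\|_2$ and $f/\phi^n$ converges weakly to $0$, so $f=0$.)

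Next I would assemble the unitary. Having identified the wandering subspace $K_\phi$ of dimension $\dim K_\phi =: m$ (possibly infinite), the Wold decomposition gives $H^2 = \bigoplus_{n\ge 0} \phi^n K_\phi$ as an orthogonal direct sum, and $T_\phi$ acts on this sum by shifting $\phi^n K_\phi$ isometrically onto $\phi^{n+1}K_\phi$. On the other hand, the model space for a unilateral right shift of multiplicity $m$ is $\ell^2(\NN_0; \mathcal{E})$ with $\dim \mathcal{E} = m$, on which the shift sends the $n$-th copy of $\mathcal{E}$ onto the $(n+1)$-th. Fixing any unitary $u : \mathcal{E} \to K_\phi$ and setting $U(e_n) = \phi^n u(e)$ on each summand (then extending by linearity and continuity), one obtains a unitary $U : \ell^2(\NN_0;\mathcal{E}) \to H^2$ with $U S = T_\phi U$, which is exactly unitary equivalence of $T_\phi$ with the multiplicity-$m$ shift.

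The main obstacle — really the only non-formal point — is the purity/completeness claim $\bigcap_{n} \phi^n H^2 = \{0\}$, equivalently that $\san\{\phi^n K_\phi : n \ge 0\}$ is dense in $H^2$; everything else is the standard Wold machinery plus a choice of orthonormal bases. I expect that step to be short: it follows either from the weak-convergence argument above, or by noting that $H^2 \ominus \bigoplus_n \phi^n K_\phi$ is reducing for $T_\phi$ and $T_\phi$ restricted there is unitary, hence (being multiplication by the non-constant inner function $\phi$ on a subspace of $H^2$, where no nonzero function can have $\|\phi f\| = \|f\|$ with $\phi f$ still forced back by the unitary) must be trivial. I would write the argument in whichever of these forms is cleanest, then state that the induced map $U$ is the desired unitary equivalence.
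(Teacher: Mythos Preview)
Your approach is essentially the same as the paper's: both rest on the orthogonal decomposition $H^2 = \bigoplus_{n \ge 0} \phi^n K_\phi$, from which the unitary equivalence with the shift of multiplicity $\dim K_\phi$ is immediate; the paper simply quotes this decomposition with a reference rather than deriving it via the Wold machinery and a purity check as you do. One small cleanup: for purity, the pointwise argument you mention (writing $f=\phi^n g_n$ with $\|g_n\|=\|f\|$ and using $|f(z)|\le |\phi(z)|^n C_z\|f\|\to 0$ for non-constant $\phi$) is the one that actually closes, whereas the parenthetical ``$f/\phi^n\to 0$ weakly, so $f=0$'' does not by itself yield $f=0$.
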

\beginpf
This follows easily from the orthogonal decomposition
\[
H^2 = K_\phi \oplus \phi K_\phi \oplus \phi^2 K_\phi \oplus \cdots,
\]
which has been used in many places, for example, \cite{cgp15}.
\endpf

If we take $\phi$ to be irrational (not a finite Blaschke product), then
$V=T_\phi^*$ is the cogenerator of a $C_0$ semigroup on $H^2$, and it is easy to check
that $\exp(t(\phi+1)/(\phi-1))$ is a singular inner function for each $t \ge 0$. We therefore
have the following theorem.

\begin{thm}
(i) Let $\phi$ be an inner function that is not a finite Blaschke product. Then the semigroup $(U_t)_{t \ge 0}$
consisting of anti-analytic Toeplitz operators $T^*_{\phi_t}$,
where 
\[
\phi_t = \exp \left( t \frac{\phi+1}{\phi-1} \right), \qquad t \ge 0,
\]
 is universal for the class of $C_{0.}$ contraction semigroups.
 
(ii) Moreover, if a semigroup $(U_t)_{t \ge 0}$ has the form
$U_t= T^*_{\phi_t}$, where $\phi_t=\exp(t \psi)$ is a singular inner function
for each $t$, then $\phi:= (\psi+1)/(\psi-1)$ is inner, and if it is irrational the conclusions of part (i) apply.
\end{thm}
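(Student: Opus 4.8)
The plan is to prove (i) by identifying the cogenerator $T_\phi^*$ with the backward shift of infinite multiplicity and transporting the universality of Theorem~\ref{thm:S1univ} across a unitary equivalence, and to prove (ii) by a short function-theoretic argument showing that $\phi$ is inner.

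For (i) I would argue as follows. Since $\phi$ is inner, $T_\phi$ is an isometry, so $T_\phi^*$ is a contraction; by the preceding lemma $T_\phi$ is a unilateral shift, hence completely non-unitary, so $T_\phi^*$ is a $C_{0.}$ contraction and in particular $1$ is not an eigenvalue of $T_\phi^*$. By the Sz.-Nagy--Foia\c s criterion quoted earlier, $T_\phi^*$ is the cogenerator of a $C_0$-semigroup of contractions $(U_t)_{t\ge0}$; applying the $H^\infty$ functional calculus $U_t=\chi_t(T_\phi^*)$ with $\chi_t(z)=\exp\!\big(t(z+1)/(z-1)\big)$, and using that the calculus for the co-analytic Toeplitz operator $T_\phi^*=T_{\bar\phi}$ acts by composition of symbols, one obtains $U_t=T^*_{\chi_t\circ\phi}=T^*_{\phi_t}$ — precisely the semigroup in the statement, and precisely the computation already carried out on the half-plane just before the theorem. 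Next, because $\phi$ is not a finite Blaschke product, $\dim K_\phi=\infty$, so the preceding lemma gives that $T_\phi$ is unitarily equivalent to a unilateral right shift of infinite multiplicity, hence $T_\phi^*$ is unitarily equivalent to the backward shift $S_1$ of infinite multiplicity on $L^2(0,\infty)$. A unitary equivalence of cogenerators yields a unitary equivalence of the corresponding semigroups, so $(U_t)_{t\ge0}$ is unitarily equivalent to the semigroup of Theorem~\ref{thm:S1univ}. Finally, universality for the class of $C_{0.}$ contraction semigroups is preserved under unitary equivalence: if $W$ conjugates the two semigroups and $R:\GM\to\GH$ realises a given $C_{0.}$ contraction semigroup on an invariant subspace $\GM$ of the model semigroup, then $W\GM$ is invariant for $(U_t)$ and $RW^{-1}:W\GM\to\GH$ does the job. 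This proves (i).

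For (ii), since $\phi_t=\exp(t\psi)$ is (singular) inner for every $t>0$, it is zero-free on $\DD$, so $\psi$ is analytic there, and from $|\phi_1|\le1$ on $\DD$ together with $|\phi_1|=1$ a.e.\ on $\TT$ we get $\re\psi\le0$ on $\DD$ and $\re\psi=0$ (nontangentially) a.e.\ on $\TT$. Setting aside the degenerate case of constant $\psi$, the maximum principle gives $\re\psi<0$ throughout $\DD$, so $\psi$ maps $\DD$ into the open left half-plane and, nontangentially, sends $\TT$ into the imaginary axis a.e. The M\"obius map $w\mapsto(w+1)/(w-1)$ is an involution carrying the open left half-plane onto $\DD$ and the imaginary axis onto $\TT$; hence $\phi=(\psi+1)/(\psi-1)$ is a nonconstant analytic self-map of $\DD$ with unimodular boundary values a.e., i.e.\ $\phi$ is inner. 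Since the M\"obius map is an involution, $(\phi+1)/(\phi-1)=\psi$, so $\phi_t=\exp\!\big(t(\phi+1)/(\phi-1)\big)$; thus if $\phi$ is not a finite Blaschke product we are exactly in the situation of part (i), which gives the claimed conclusion.

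The main obstacle is the symbol-composition step in (i): one must justify that the Sz.-Nagy--Foia\c s $H^\infty$ functional calculus applied to the co-analytic Toeplitz operator $T_\phi^*$ sends the singular inner function $\chi_t$ to $T^*_{\chi_t\circ\phi}$. This is essentially routine — it holds for polynomials in $\bar\phi$ because $T_{\bar g}T_{\bar h}=T_{\overline{gh}}$ for $g,h\in H^\infty$, and passes to $\chi_t$ by the weak-$*$ continuity of the functional calculus together with the fact that $\chi_t\circ\phi$ is again inner — and it is already spelled out in the half-plane model preceding the theorem, but it is the place where care is needed. In (ii) the only subtle points are the exclusion of the degenerate constant case and the correct (Fatou, nontangential) reading of the boundary identities $\re\psi=0$ and $|\phi|=1$ a.e.
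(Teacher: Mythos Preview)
Your argument is correct and follows essentially the same route as the paper, which is very terse here: the theorem is presented as an immediate consequence of the preceding lemma (giving $T_\phi^*$ unitarily equivalent to $S_1$ when $\dim K_\phi=\infty$) together with Theorem~\ref{thm:S1univ}, with no separate proof environment. You supply considerably more detail than the paper does---in particular the functional-calculus justification that $U_t=T^*_{\phi_t}$ and the entire function-theoretic argument for part~(ii), which the paper leaves to the reader---and your identification of the symbol-composition step as the one place requiring care is apt.
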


Note that the semigroup corresponding to $\phi(z)=-z$ (inner, but rational) is given by the function $\phi_t=\exp(t(1-z)/(1+z))$. This
 is unitarily equivalent to the shift semigroup $(S_t)_{t \ge 0}$, which is not universal.\\

 \begin{rem}{\rm 
 It was shown by Gamal' \cite{gamal1,gamal2}, extending work of Clark \cite{clark}, that if
 $B$ is a finite Blaschke product and $\phi$ is an irrational inner function, then the
 Toeplitz operator $T_{\phi/B}$ is similar to an isometry $U \oplus S$, where $U$ is unitary
 and $S$ is a unilateral shift of infinite multiplicity.
 It follows that the semigroup with cogenerator $T_{B/\phi}$ is universal for
 the class of contraction semigroups, in the sense of Theorem \ref{thm:S1univ}.}
 \end{rem}
 
 \begin{rem}
 It was shown by Sz.-Nagy \cite{nagy} that every bounded $C_0$ group on a Hilbert space is similar to
 a group of unitary operators. One might therefore hope for the existence of a universal unitary
 group $(U_t)_{t \in \RR}$ such that every bounded group $(T_t)_{t \in \RR}$ could be
 represented in the form $T_t = R (U_t)_{|\GM} R^{-1}$ for some isomorphism $R$ and 
 invariant subspace $\GM$ for $(U_t)$. However, by looking at cogenerators, we see that $(U_t)$ would
 possess a unitary cogenerator such that every point on $\TT$ with the exception of $1$ would
 be an eigenvalue of infinite multiplicity. In a separable Hilbert space this is impossible, since
 eigenvectors corresponding to distinct eigenvalues are orthogonal.
 \end{rem}

\section{$C_0$-semigroups close to isometries}
Let $\GH$ be a complex infinite dimensional and separable Hilbert space.   

Recall that $T\in\LH$ is bounded below if there exists $C>0$ such that $\|Tx\|\geq C\|x\|$ for all $x\in \GH$. Equivalently, $T$ is bounded below if and only if $T^*T$ is invertible. 
In the sequel, the spectral radius of $T$ is denoted by $r(T)$. 

In order to state a theorem following from the work of Shimorin \cite{shimorin}, we introduce the following definitions. 

\begin{defn}
	Let $T\in\LH$. 
	\begin{enumerate}
		\item The operator $T$ is \emph{pure} if $\bigcap_{n\geq 0}T^n \GH=\{0\}$.
		\item The operator $T$ has  the \emph{wandering subspace property} if $\GH$ is the closed linear hull (span) of $\{T^nE:n\geq 0\}$, where $E:=\GH\ominus T\GH$.
		\item For $T\in\LH$ bounded below, its \emph{Cauchy dual} is denoted $T'$ and defined by $T':= T(T^*T)^{-1}$. 
	\end{enumerate} 
\end{defn}

\begin{defn}
	Let $D=D(0,r)$ be the open disc of $\C$ centered at $0$ and of radius $r>0$. Let $E$ be a Hilbert space and  let $\Hil$ be a Hilbert space of holomorphic functions on $D$ taking values in $E$. A reproducing kernel on $\Hil$ is a map 
	\[ k~:~ \left\lbrace \begin{array}{ccc}
	D \times D & \rightarrow & \B (E) \\ 
	(\lambda, z) & \mapsto & k(\lambda, z)
	\end{array}   \right. \]
	such that
	\begin{enumerate}
		\item $\forall \lambda \in D$, $\forall e \in E$, $k(\lambda, \cdot) e \in \Hil$ ;
		\item $\forall \lambda \in D$, $\forall f \in \Hil$, $\forall e \in E$, $\langle f, k(\lambda, \cdot)  e \rangle_{\Hil} = \langle f(\lambda), e\rangle_E$.
	\end{enumerate}
\end{defn}

\subsection{Unitary equivalence with a shift on a reproducing kernel Hilbert space}
The following theorem is a consequence of the work of Shimorin  \cite{shimorin} but not stated explicitly. For completeness we will prove it in detail, by putting together the ideas developed in \cite{shimorin}. 

\begin{thm}\label{th:shimorin}
	Let $T\in\LH$ such that $T$ is bounded below, pure and with the wandering subspace property. 
	Then, there exists  a reproducing kernel Hilbert space $\Hil$ of holomorphic functions from  $D(0,r)$, where $r=r(T')$, to $E=\GH\ominus T\GH$, and a unitary operator $U : \GH \rightarrow \Hil$ such that
	
	\[  T = U^{-1} \Sigma U ,\]
	
	where $\Sigma ~:~ \left\lbrace \begin{array}{lll}
	\Hil & \rightarrow  & \Hil \\ 
	f & \mapsto  & (z \mapsto z ~ f(z)) 
	\end{array}  \right.  \in \B(\Hil) $.
	Moreover, the reproducing kernel $k$ is such that $k(0, \cdot) = (z \mapsto \Id_{{\mathcal L}(E)})$.
\end{thm}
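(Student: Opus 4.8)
The natural strategy is to build the \emph{analytic model} of $T$, assembling the ingredients of \cite{shimorin}. Since $T$ is bounded below, $T^{*}T$ is invertible, so one may form the left inverse $L:=(T^{*}T)^{-1}T^{*}=(T')^{*}\in\LH$, which satisfies $LT=\Id$, $\Id-TL=P_{E}$ (the orthogonal projection onto $E=\ker T^{*}=\GH\ominus T\GH$), $L^{*}=T'$, and $Le=0$ for all $e\in E$. The model map is defined on $\GH$ by
\[
(Ux)(z):=\sum_{n\geq 0}\bigl(P_{E}L^{n}x\bigr)z^{n},
\]
with coefficients in $E$. Since $\|P_{E}L^{n}x\|\leq\|L^{n}\|\,\|x\|$, the root test shows these power series converge uniformly on compact subsets of the disc $D(0,r)$ of the statement, so each $Ux$ is a holomorphic $E$-valued function there and $U$ is linear.

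I would then verify three structural facts. \emph{Injectivity:} one checks that $\ker U=\bigcap_{n\geq 0}T^{n}\GH$ --- if $P_{E}L^{n}x=0$ for all $n$, then $P_{E}x=0$ forces $x\in T\GH$, say $x=Ty_{1}$, whence $P_{E}Lx=P_{E}y_{1}=0$ forces $y_{1}\in T\GH$, and an induction gives $x\in T^{n}\GH$ for every $n$ (the reverse inclusion is similar, using $L^{k}T^{k}=\Id$) --- so purity gives $\ker U=\{0\}$. \emph{Intertwining:} using $L^{n}T=L^{n-1}$ for $n\geq1$ and $P_{E}T=0$, one gets $(UTx)(z)=z\,(Ux)(z)$, i.e.\ $UT=\Sigma U$; in particular $\Sigma$ equals $UTU^{-1}$ on $U(\GH)$ and maps it into itself. \emph{Transport of the Hilbert structure:} set $\Hil:=U(\GH)$ and define $\langle Uf,Ug\rangle_{\Hil}:=\langle f,g\rangle_{\GH}$; this is legitimate by injectivity, $U:\GH\to\Hil$ becomes a unitary, $\Hil$ is therefore complete, $T=U^{-1}\Sigma U$, and $\Sigma$ is bounded on $\Hil$ because $\|\Sigma Uf\|_{\Hil}=\|Tf\|_{\GH}\leq\|T\|\,\|f\|_{\GH}$.

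It remains to exhibit the reproducing kernel. For $\lambda$ in the disc and $e\in E$ one computes
\[
\langle (Uf)(\lambda),e\rangle_{E}=\sum_{n\geq0}\langle f,(T')^{n}e\rangle_{\GH}\lambda^{n}=\bigl\langle f,(\Id-\bar\lambda\,T')^{-1}e\bigr\rangle_{\GH},
\]
so point evaluations are bounded functionals on $\Hil$, and $\Hil$ is a reproducing kernel Hilbert space of $E$-valued functions with $\B(E)$-valued kernel $k(\lambda,\cdot)e=U\bigl[(\Id-\bar\lambda\,T')^{-1}e\bigr]$, that is $k(\lambda,z)e=\sum_{n\geq0}P_{E}L^{n}(\Id-\bar\lambda\,T')^{-1}e\,z^{n}$. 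Setting $\lambda=0$ and using $Le=0$ on $E$, only the constant term survives, so $k(0,z)e=P_{E}e=e$, i.e.\ $k(0,\cdot)=\Id_{E}$, as required. The wandering subspace hypothesis enters precisely at this point: $UE$ is exactly the set of constant functions $z\mapsto e$ $(e\in E)$, and transporting $\GH=\bigvee_{n}T^{n}E$ through the unitary $U$ yields $\Hil=\bigvee_{n}\Sigma^{n}(UE)$, so that $E$-valued polynomials are dense in $\Hil$ and $\Sigma$ is genuinely the forward shift on it; bounded-belowness and purity suffice for everything before this.

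I expect the main effort to lie in the two analytic points rather than in the algebra: checking that the transported space $\Hil$ really is a Hilbert space of honest holomorphic functions on the stated disc with bounded point evaluations, and pinning down the $\B(E)$-valued kernel together with its value at the origin. Given the left inverse $L$ and its elementary identities, the intertwining $UT=\Sigma U$, the transport of the inner product, and the normalization $k(0,\cdot)=\Id_{E}$ are essentially bookkeeping --- which is why the result can fairly be described as a reorganisation of the arguments of \cite{shimorin}.
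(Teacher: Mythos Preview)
Your proposal is correct and follows essentially the same route as the paper: the same left inverse $L=(T^{*}T)^{-1}T^{*}$, the same model map $Ux=\sum_{n}(P_{E}L^{n}x)z^{n}$, the transported inner product on $\Hil=U(\GH)$, the kernel computation via $(\Id-\bar\lambda T')^{-1}e$, and the intertwining $UT=\Sigma U$. The only cosmetic differences are that the paper proves injectivity via the telescoping identity $\sum_{k=0}^{n-1}T^{k}P_{E}L^{k}x=x-T^{n}L^{n}x$ rather than your direct induction, and that you make explicit the (otherwise silent) role of the wandering subspace hypothesis as giving density of the $E$-valued polynomials in $\Hil$.
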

\begin{proof}
	\textbf{We first construct $U$}. \\Since $T$ is bounded below, its Cauchy dual $T'$ is well defined. Denote by $L$ the adjoint of $T'$ and denote by $P$ the orthogonal projection onto $E$. \\
	\textbf{Claim 1}: $P=\Id-TL$.\\
	Indeed, let $Q = \Id - TL$. Since $LT = \Id$, it follows that 
	\[Q^2 = I - 2 TL + TLTL = \Id - TL = Q.\]
	Moreover, $Q$ is a self-adjoint operator since $TL$ is self-adjoint. It suffices to show that $\ker (TL) = E$. Since, $T$ is left invertible, we get
	\[\ker (TL) = \ker (L) = \ker ((T^*T)^{-1}T^*) = \ker(T^*)= (TH)^\perp = E. \]
	We now define the linear mapping  $U$ in the following way:
	\[ U ~:~ \left\lbrace  \begin{array}{lll}
	\Hil & \rightarrow & \text{Hol }(D(0,r);E)  \\ 
	x & \mapsto  & \sum\limits_{n \geqslant 0} \left( PL^nx \right)z^n 
	\end{array}  \right.  .\]
	The convergence  of the series follows from the fact that $r$ is the spectral radius of $L$.  \\
	\textbf{Claim 2}: $U$ is one-to-one.\\
	Indeed, let $x \in \ker (U)$. Then, for every $n \in \N$, $PL^nx = 0$. We prove that $x \in\bigcap\limits_{n \geqslant 1} T^n \GH$. Let $n \geqslant 1$ and note  that, according to Claim 1, 
	\[
	\sum\limits_{k=0}^{n-1} T^kPL^kx  =  \sum\limits_{k=0}^{n-1} T^kL^k x - T^{k+1}L^{k+1} x \\
	=  x - T^nL^n x.\]
	It follows that 
	\[
	x =  x - T^nL^n x + T^nL^n x \\
	=  \sum\limits _{k=0}^{n-1} T^kPL^kx + T^nL^n x.\]
	Since  for all $k \in \{ 0 ; n-1 \}$, $PL^kx = 0$, we get  $x = T^n L^n x \in T^nH$.  \\
	
	Let $\Hil \subset \text{Hol }(D(0,r);E)$ be the image of $U$. Since $U$ is one-to-one, $U$ is an isomorphism of vector spaces. We define on $\Hil$ a scalar product by setting
	\[\forall f,g \in \Hil, ~ \langle f,g\rangle_\Hil = \langle U^{-1}f,U^{-1}g\rangle_H,\]
	so that $U$ is unitary.\\
	The second step consists in checking that \textbf{$\Hil$ is a reproducing kernel Hilbert space.}\\
	For $\lambda \in D(0,r)$ and $e \in E$, we have
	\begin{eqnarray*}
		\langle f(\lambda),e \rangle_E & = & \langle  \sum\limits_{n \geqslant 0} \left( PL^nU^{-1}f \right) \lambda^n , e \rangle_E  =   \langle  \sum\limits_{n \geqslant 0} (\lambda L)^n (U^{-1}f), Pe \rangle_E\\
		& = &  \langle  (\Id - \lambda L^{-1}) (U^{-1}f),e \rangle_E 
		=  \langle U^{-1}f , (\Id - \overline{\lambda}L^*)^{-1}e \rangle_H \\
		& = & \langle f, U(\Id - \overline{\lambda}L^*)^{-1}e\rangle_\Hil
	\end{eqnarray*}
	
	On the other hand, for $z\in D(0,r)$, we have:
	\begin{eqnarray*}
		\left( U(\Id - \overline{\lambda}L^*))^{-1}e \right) (z) & = & \sum\limits_{n \geqslant 0}PL^n \left[ (\Id - \overline{\lambda}L^*)^{-1} e \right]z^n\\
%		&= & P \left( \sum\limits_{n \geqslant 0}L^n \left[ (\Id - \overline{\lambda}L^*)^{-1} e \right]z^n \right)\\
		& = & P \left( \sum\limits_{n \geqslant 0}(zL)^n \left[ (\Id - \overline{\lambda}L^*)^{-1} e \right] \right)\\
		& = &  P (\Id-zL)^{-1}(\Id - \overline{\lambda}L^*)^{-1} e.
	\end{eqnarray*}
	Therefore $\Hil$ is a reproducing kernel Hilbert space of holomorphic functions, whose reproducing kernel is defined by
	\[ k(\lambda, z) = P (\Id-zL)^{-1}(\Id - \overline{\lambda}L^*)^{-1} . \]
	\textbf{The third step} consists in proving that  \textbf{$z \mapsto k(0,z)$ is a constant function} whose value is $\Id_E$. To that aim we prove that, for every $f \in \Hil$ and every $e \in E$,
	\[ \langle f,k(0,\cdot)e \rangle_\Hil  = \langle f(0), e \rangle_E. \]
	Let $f \in \Hil$ and $x = U^{-1}f$. Let $e \in E$. Note that
	\begin{eqnarray*}
		f(0)&=& Px 
		 =  \langle Px, e \rangle_E 
		 =  \langle x , e \rangle_H 
		=  \langle f, Ue \rangle_\Hil .
	\end{eqnarray*}
	However, by Claim 1, $P e = e = e - TL e$. Hence, $TLe = 0$ and so $Le = 0$ since $T$ is bounded below. Therefore $U e = \sum\limits_{n \geqslant 0} PL^nx z^n = e$, that is, $Ue$ is the constant function $z \mapsto e$. Then,
	\[ \langle f, k(0,\cdot)e \rangle_\Hil = \langle f, z \mapsto e \rangle. \]
	
	\textbf{The last step} consists in proving that \text{$T$ is unitarily equivalent to $\Sigma$}. 
	%We prove that $\Sigma U = UT$.
	Let $x \in H$. Let $z \in D(0,r)$.
	\begin{eqnarray*}
		(UT)(x)(z) & = & \sum\limits_{n \geqslant 0} P (L^nTx)z^n 
		=  \sum\limits_{n \geqslant 1} P (L^nTx)z^n \\
		& = & z \sum\limits_{n \geqslant 1} P L^{n-1}(LT)x z^{n-1}
		=  z \sum\limits_{n \geqslant 1} P L^{n-1}x z^{n-1}\\
		& = & z U(x)(z)
		=  \Sigma U (x) (z).
	\end{eqnarray*} 
	This concludes the proof of the theorem.
\end{proof}
We can now obtain a representation theorem for $C_0$-semigroups whose cogenerator satisfies the hypothesis of the previous theorem.  
\begin{cor}\label{cor:semigroups}
	Let $(T_t)_{t\geq 0}$ be a $C_0$-semigroup on $\GH$ which admits a cogenerator $V$. Assume that $V$ is bounded below, pure and with the wandering subspace property.
	Then, there exists  a reproducing kernel Hilbert space $\Hil$ of holomorphic functions from 
	$D(0;r) \rightarrow E$ (with $r=r(V')$ and $E=\GH\ominus V\GH$) and a unitary operator $U : \GH \rightarrow \Hil$ such that, for every $t \geqslant 0$,
	\[T_t = U^{-1}S_tU\]
	where $S_t : \left\lbrace \begin{array}{lll}
	\Hil & \rightarrow & \Hil \\ 
	f & \mapsto & \left(z \mapsto e^{t\frac{1+z}{1-z}} f(z)\right)
	\end{array} \right. \in \B(\Hil)$.
\end{cor}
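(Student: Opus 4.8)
The plan is to feed the cogenerator $V$ directly into Theorem \ref{th:shimorin} and then pass back and forth through the Cayley correspondence between a $C_0$-semigroup and its cogenerator. By hypothesis $V$ is bounded below, pure, and has the wandering subspace property, so Theorem \ref{th:shimorin} produces exactly the data appearing in the statement: the reproducing kernel Hilbert space $\Hil$ of $E$-valued holomorphic functions on $D(0,r)$, with $r=r(V')$ and $E=\GH\ominus V\GH$, together with a unitary $U:\GH\to\Hil$ for which $V=U^{-1}\Sigma U$, where $\Sigma$ is multiplication by $z$. In particular the reproducing-kernel structure of $\Hil$ is the one supplied by Theorem \ref{th:shimorin}, so nothing further is needed there; it remains only to identify the operators $T_t$ inside this model.

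Put $S_t:=UT_tU^{-1}$. This is a $C_0$-semigroup on $\Hil$, unitarily equivalent to $(T_t)_{t\ge 0}$, and conjugation by the fixed unitary $U$ commutes both with passage to the infinitesimal generator and with the Cayley transform, so the cogenerator of $(S_t)_{t\ge 0}$ is $UVU^{-1}=\Sigma$. Since a $C_0$-semigroup is \emph{uniquely determined by its cogenerator} — the latter recovers the generator via $A=(V+\Id)(V-\Id)^{-1}$ on $\dom A=\operatorname{ran}(V-\Id)$, and the generator determines the semigroup (see \cite{nagy-foias}) — it suffices to check that the multiplication operators
\[
S_t : f \longmapsto \Big(z\mapsto e^{t\frac{1+z}{1-z}}f(z)\Big),\qquad t\ge 0,
\]
of the statement form a $C_0$-semigroup on $\Hil$ whose cogenerator is $\Sigma$; the corollary then follows at once by transporting back through $U^{-1}$. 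Concretely, I would compute the generator of this multiplication semigroup by differentiating at $t=0$ (it is multiplication by the M\"obius symbol $\frac{1+z}{1-z}$ on its natural domain) and then compute its Cayley transform, which should return $\Sigma$; alternatively one may read $S_t$ off from $\Sigma$ directly by invoking the Sz.-Nagy--Foias functional-calculus identity expressing a semigroup in terms of its cogenerator, together with the fact that for a multiplication operator the functional calculus is again given by multiplication.

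The main obstacle is the analytic bookkeeping hidden in the previous paragraph. One must first check that the symbol $e^{t\frac{1+z}{1-z}}$ is holomorphic and bounded on $D(0,r)$ — this hinges on the pole $z=1$ of the M\"obius map lying off the open disc $D(0,r)$, hence on pinning down $r=r(V')$ — and that such bounded holomorphic symbols act as bounded multipliers on $\Hil$, so that each $S_t$ genuinely makes sense as a bounded operator; strong continuity and the semigroup law are then routine. More delicate is the cogenerator computation itself: when $(T_t)_{t\ge 0}$ fails to be uniformly continuous one has $1\in\sigma(V)=\sigma(\Sigma)$, the candidate generator (multiplication by $\frac{1+z}{1-z}$) is unbounded, and one has to track carefully the domains of $A\pm\Id$ and the range of $\Sigma-\Id$ in order to verify that the Cayley transform of the generator really equals $\Sigma$, rather than merely agreeing with it on a dense subspace. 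Once these points are settled — or once the classical functional-calculus identity is invoked in the form valid in the present setting — the equality $T_t=U^{-1}S_tU$ is immediate.
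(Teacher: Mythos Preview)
Your overall architecture is the same as the paper's: apply Theorem~\ref{th:shimorin} to $V$, obtain $V=U^{-1}\Sigma U$, transport the semigroup to $\Hil$, and identify $UT_tU^{-1}$ with the multiplication $S_t$. The difference lies in how that last identification is carried out, and this is where your proposal runs into trouble.

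You want to \emph{start} from the explicit multiplication operators $f\mapsto e^{t(1+z)/(1-z)}f$ and then verify that they form a $C_0$-semigroup on $\Hil$ with cogenerator $\Sigma$. The difficulty you flag --- that the symbol must act as a bounded multiplier on $\Hil$ --- is a genuine gap, not just bookkeeping: $\Hil$ is an abstract RKHS manufactured by Theorem~\ref{th:shimorin}, and a bounded holomorphic function on $D(0,r)$ need not be a multiplier of such a space. There is no result in the paper (or invoked from outside) giving you this, and your fallback to the Sz.-Nagy--Foias functional calculus for the cogenerator is unavailable here because the corollary does not assume $(T_t)_{t\ge0}$ is contractive, so $V$ and $\Sigma$ need not be contractions.

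The paper sidesteps exactly this issue by reversing the logic. It \emph{defines} $(S_t)_{t\ge0}$ on $\Hil$ as the $C_0$-semigroup whose cogenerator is $\Sigma$ (this exists automatically, being $UT_tU^{-1}$), so boundedness and strong continuity come for free. It then identifies $S_t$ pointwise: for $f\in\dom(A)$, $z\in D(0,r)$ and $e\in E$, the scalar function $t\mapsto\langle S_tf,\,k(z,\cdot)e\rangle_{\Hil}=\langle (S_tf)(z),e\rangle_E$ is shown to solve the linear ODE $\phi'(t)=\tfrac{z+1}{z-1}\phi(t)$ with $\phi(0)=\langle f(z),e\rangle_E$, whence $(S_tf)(z)=e^{t(z+1)/(z-1)}f(z)$. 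The reproducing kernel is what makes this work: it converts the abstract semigroup equation into a scalar Cauchy problem at each point $z$. In particular the multiplier property of the symbol is obtained \emph{a posteriori}, not assumed. Your route could only be completed by first establishing that multiplier property --- and the cleanest way to do that is precisely the paper's ODE argument, which renders the detour through verifying the cogenerator unnecessary.
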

\begin{proof}
	By Theorem~\ref{th:shimorin} applied to $V$, there exist $\Hil$ and $U$ such that $V = U^{-1} \Sigma U$. Let $(S_t)_{t\geq 0}$ be the $C_0$-semigroup whose cogenerator is $\Sigma$ and the generator is $A$. 
	We have that $A = (\Sigma + \Id)(\Sigma-\Id)^{-1}$. Since $\Sigma$ is a multiplication operator on $\Hil$, $A$ is also a multiplication operator on $\Hil$ and, for every $f \in \dom (A)$, and for every $z \in D(0,r)$,
	\[ A(f)(z) = \frac{z+1}{z-1} f(z) . \]
	We now prove that $S_t$ is the multiplication operator whose symbol is $z \mapsto e^{t\frac{z+1}{z-1}}$. Since $\dom(A)$ is dense in $\Hil$, it suffices to show that, for every $f \in \dom(A)$,
	\[ \forall z \in D, ~ S_t(f)(z) = e^{t\frac{z+1}{z-1}} f(z) . \]
	Let $f \in \dom(A)$ and 
	\[ \Phi ~:~ \left\lbrace \begin{array}{lll}
	\R^+& \rightarrow & \Hil \\ 
	t & \mapsto  & S_t(f)
	\end{array}   \right. . \]
	Note that $\Phi$ is differentiable, and for every $t\geq 0$, $\Phi^\prime (t) = A (\Phi(t))$.
	Let $z \in D(0,r)$. We prove that for all $e\in E$, 
	\[ \langle S_t(f)(z),e\rangle_E = \left\langle e^{t\frac{z+1}{z-1}} f(z), e \right\rangle_E . \]
	Let $e \in E$ and 
	\[ \varphi ~:~ \left\lbrace \begin{array}{lll}
	\R^+& \rightarrow & \C \\ 
	t & \mapsto  & \langle S_t(f)(z), e \rangle_E
	\end{array}   \right. . \]
	For every $t\geq 0$,
	\begin{eqnarray*}
		\phi(t) & = &\langle S_t(f),k(z,\cdot)e \rangle_\Hil\\
		& = & \langle \Phi(t),k(z,\cdot)e \rangle_\Hil.
	\end{eqnarray*}	
	Since $\Phi$ is differentiable, $\phi$ is also differentiable and, for every $t\geq 0$,
	\begin{eqnarray*}
		\phi^\prime(t) & = & \langle \Phi^\prime (t), k(z,\cdot)e \rangle_\Hil 
		=  \langle A \phi(t), k(z, \cdot)e \rangle_E \\
		& = & \langle A \Phi(t)(z), e\rangle_E 
		= \left\langle \frac{z+1}{z-1} \Phi(t)(z), e \right\rangle_E \\
		& = & \frac{z+1}{z-1} \phi(t).
	\end{eqnarray*}
	Furthermore, $\phi(0) = \langle S_0(f)(z), e \rangle_E = \langle f(z),e\rangle_E$. Hence, $\phi$ is the solution of a linear Cauchy problem of order 1, which gives that, for every $t\geq 0$,
	\begin{eqnarray*}
		\phi (t) & = & e^{t\frac{1+z}{1-z}} \phi(0), \mbox{ and then }\\
		\langle S_tf(z),e \rangle_E & =  & e^{t\frac{z+1}{z-1}}  \langle f(z), e \rangle_E = 
 \langle e^{t\frac{z+1}{z-1}}   f(z), e \rangle_E.
	\end{eqnarray*}
	This concludes the proof.
\end{proof}	

\subsection{Semigroups modelled by a shift}
The aim of this section is to produce explicit examples on which Corollary~\ref{cor:semigroups} can be used. We first recall some definitions.  
\begin{defn}
Let $\GH$ be a 	complex infinite dimensional separable Hilbert space.  
\begin{enumerate}
	\item $T\in\LH$ is called a \emph{2-isometry} if $T^{*2}T^2-2T^*T+\Id=0$ (i.e. $\forall x\in\GH$, $\|T^2x\|^2+\|x\|^2=2\|Tx\|^2$). 
	\item $T\in\LH$ is called a \emph{2-contraction} if $T^{*2}T^2-2T^*T+\Id\geq 0$ (i.e. $\forall x\in\GH$, $\|T^2x\|^2+\|x\|^2\geq 2\|Tx\|^2$). 
	\item $T\in\LH$ is   \emph{concave} if $T^{*2}T^2-2T^*T+\Id\leq 0$ (i.e. $\forall x\in\GH$, $\|T^2x\|^2+\|x\|^2\leq 2\|Tx\|^2$). 
\end{enumerate}
\end{defn} 
Note that  the set of $2$-isometries is the intersection of the sets of concave and $2$-hypercontractive operators. 
Moreover an isometry is a $2$-isometry but the converse is false since the shift on the Dirichlet space $\mathcal D$ is a $2$-isometry but it  is not isometric (cf. \cite{gallardo-partington}).
\begin{thm}\label{th:sg-concave}
	Let $(T_t)_{t\geq 0}$ be a $C_0$-semigroup on $\GH$ such that for every $t>0$, $T_t$ is pure and concave. Then  there exist $r>0$, a Hilbert space $E$  and a reproducing kernel Hilbert space $\Hil$ of holomorphic functions from 
	$D(0;r)$ into $E$  and a unitary operator $U : \GH \rightarrow \Hil$ such that, for every $t \geqslant 0$,
	\[T_t = U^{-1}S_tU\]
	where $S_t : \left\lbrace \begin{array}{lll}
	\Hil & \rightarrow & \Hil \\ 
	f & \mapsto & \left(z \mapsto e^{t\frac{1+z}{1-z}} f(z)\right)
	\end{array} \right. \in \B(\Hil)$. 
\end{thm}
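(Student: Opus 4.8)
The strategy is to apply Corollary~\ref{cor:semigroups}: I will first check that $(T_t)_{t\ge0}$ admits a cogenerator, and then verify that this cogenerator $V$ is bounded below, pure, and has the wandering subspace property. The starting observation concerns the functions $g_x(t)=\|T_tx\|^2$. Applying the concavity inequality $\|T_{2s}y\|^2+\|y\|^2\le 2\|T_sy\|^2$ to $y=T_rx$ shows that $g_x$ is midpoint concave on $[0,\infty)$, hence concave by continuity, and being non-negative it is also non-decreasing; in particular each $T_t$ is expansive, $\|T_tx\|\ge\|x\|$. Moreover any non-negative concave function $g$ on $[0,\infty)$ satisfies $g(t)\le (t/t_0)g(t_0)$ for $t\ge t_0$ (compare $g$ with the extension of its chord from $0$ to $t_0$), so $\|T_t\|^2\le (t/t_0)\|T_{t_0}\|^2$ for $t\ge t_0$, and thus the growth bound of $(T_t)$ is $0$. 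Consequently $\sigma(A)$ lies in the closed left half-plane, so $1\in\rho(A)$ and the cogenerator $V=(A+\Id)(A-\Id)^{-1}$ is a well-defined bounded operator.

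Next I would show that $V$ is concave. Since $1\in\rho(A)$, every $x\in\GH$ can be written uniquely as $x=(A-\Id)^2w$ with $w\in D(A^2)$; one then computes $Vx=(A^2-\Id)w$ and $V^2x=(A+\Id)^2w$, and a short calculation using the parallelogram law gives
\[
\|V^2x\|^2+\|x\|^2-2\|Vx\|^2 \;=\; 8\bigl(\Re\langle A^2w,w\rangle+\|Aw\|^2\bigr).
\]
On the other hand $\Re\langle A^2w,w\rangle+\|Aw\|^2=\tfrac12 g_w''(0)\le 0$, because $g_w$ is concave. Hence $V$ is concave. From concavity alone $V$ is bounded below (the sequence $n\mapsto\|V^nx\|^2$ is concave and non-negative, hence non-decreasing, so $\|Vx\|\ge\|x\|$), so its Cauchy dual $V'$ and the radius $r=r(V')$ are meaningful.

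It remains to check that $V$ is pure and has the wandering subspace property. For purity, suppose $\GH_\infty:=\bigcap_{n\ge0}V^n\GH\ne\{0\}$. As $V$ is bounded below, $\GH_\infty$ is a closed $V$-invariant subspace on which $V$ acts invertibly; for $x\in\GH_\infty$ the sequence $n\mapsto\|V^nx\|^2$ is then concave and non-negative on all of $\ZZ$, hence constant, so $V|_{\GH_\infty}$ is unitary (and has no eigenvalue $1$, being a restriction of $V$). By Fuhrmann's result cited above, $\GH_\infty$ is invariant for every $T_t$, and the restricted semigroup has cogenerator $V|_{\GH_\infty}$; by the Sz.-Nagy--Foias characterisation of unitary cogenerators it extends to a $C_0$-group of unitaries, so each $T_t|_{\GH_\infty}$ is invertible and $\GH_\infty\subseteq\bigcap_nT_t^n\GH$, contradicting purity of $T_t$. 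Thus $V$ is pure, and Shimorin's theorem \cite{shimorin} (a bounded-below, pure, concave operator has the wandering subspace property) applies. Corollary~\ref{cor:semigroups} then yields the desired model, with $E=\GH\ominus V\GH$ and $r=r(V')$.

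I expect the main obstacle to be the verification that $V$ is pure: this is where the purity hypothesis on the $T_t$ must be transferred to the cogenerator, and it rests on the interplay between invariant subspaces of $V$ and of the semigroup (via Fuhrmann) together with the rigidity of concave sequences that are bounded below on all of $\ZZ$. The identity in the second paragraph, linking concavity of $(T_t)$ to concavity of $V$ through $g_w''(0)$, is the other substantive point; the remaining steps (existence of the cogenerator, bounded-belowness, and the application of Shimorin's theorem) are routine.
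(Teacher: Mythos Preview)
Your proof is correct and follows essentially the same route as the paper: establish that the cogenerator $V$ exists (via the linear growth of $\|T_t\|^2$ forced by concavity), show $V$ is concave through the identity $\|V^2x\|^2+\|x\|^2-2\|Vx\|^2=8(\Re\langle A^2w,w\rangle+\|Aw\|^2)=4g_w''(0)$, deduce bounded-belowness, prove purity of $V$ by showing the restricted semigroup on $\bigcap_n V^n\GH$ is unitary and hence trivial, invoke Shimorin for the wandering subspace property, and then apply Corollary~\ref{cor:semigroups}. The only cosmetic difference is that the paper obtains the unitarity of $V|_{\GH_\infty}$ as part of Shimorin's Wold-type decomposition, whereas you derive it directly from the observation that a nonnegative concave sequence on all of $\ZZ$ is constant; both arguments are valid and equivalent in strength.
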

The proof of  Theorem~\ref{th:sg-concave} relies on several lemmas stated below.

\begin{lem}\label{existscogenerator}
	Let $(T_t)_{t\geq 0}$ be a $C_0$-semigroup such that $T_1$ is concave. Then, $(T_t)_{t\geq 0}$ has  a cogenerator.
\end{lem}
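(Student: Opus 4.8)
The plan is to deduce from the concavity of $T_1$ that the growth bound of $(T_t)_{t\ge0}$ is non-positive, so that $1$ lies in the resolvent set $\rho(A)$ of the generator $A$; then the (negative) Cayley transform $V=(A+\Id)(A-\Id)^{-1}$ is a well-defined bounded operator, which by the definition recalled in Section~3 is exactly what it means for $(T_t)_{t\ge0}$ to have a cogenerator.

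First I would establish the elementary estimate $\|T_1^n\|\le\sqrt n\,\|T_1\|$ for every $n\ge1$. Fix $x\in\GH$ and put $a_n:=\|T_1^nx\|^2$. Applying the concavity inequality $\|T_1^2y\|^2+\|y\|^2\le2\|T_1y\|^2$ to $y=T_1^nx$ yields $a_{n+2}-a_{n+1}\le a_{n+1}-a_n$, so the increments $d_n:=a_{n+1}-a_n$ form a non-increasing sequence. If $d_N<0$ for some $N$, then $a_n\le a_N+(n-N)d_N\to-\infty$, contradicting $a_n\ge0$; hence $d_n\ge0$ for all $n$ (in particular $\|T_1x\|\ge\|x\|$, so concave operators are expansive) and $d_n\le d_0=a_1-a_0$. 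Therefore $a_n\le a_0+n(a_1-a_0)=(1-n)a_0+na_1\le n\,a_1$ when $n\ge1$, i.e. $\|T_1^nx\|^2\le n\|T_1x\|^2\le n\|T_1\|^2\|x\|^2$.

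Next I would pass to spectra. From $\|T_1^n\|\le\sqrt n\,\|T_1\|$ (and $\|T_1\|\ge1$, so $\log\|T_1\|$ is finite), we get $\tfrac1n\log\|T_1^n\|\le\tfrac1n\big(\tfrac12\log n+\log\|T_1\|\big)\to0$, whence the growth bound $\omega_0=\inf_{t>0}\tfrac1t\log\|T_t\|$ of $(T_t)_{t\ge0}$ satisfies $\omega_0\le0$ (equivalently $r(T_1)\le1$). Since the resolvent of the generator has the Laplace representation $R(\lambda,A)=\int_0^\infty e^{-\lambda t}T_t\,dt$ for $\re\lambda>\omega_0$, the spectrum of $A$ is contained in $\{\lambda:\re\lambda\le0\}$. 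In particular $1\in\rho(A)$, so $A-\Id$ is boundedly invertible and $V=(A+\Id)(A-\Id)^{-1}=\Id+2(A-\Id)^{-1}\in\LH$; that is, $(T_t)_{t\ge0}$ has a cogenerator.

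The only step with real content is the first one, the concave-sequence estimate; the remainder is standard $C_0$-semigroup theory. The point to keep in mind is that for the cogenerator merely to be defined one needs only $1\notin\sigma(A)$ — as recalled in the preliminaries — rather than a stronger non-eigenvalue condition, so the coarse inclusion $\sigma(A)\subseteq\{\re\lambda\le0\}$ is enough. Alternatively one could bypass the growth-bound computation and combine $r(T_1)\le1$ with the spectral inclusion $e^{\sigma(A)}\subseteq\sigma(T_1)$ to reach the same conclusion.
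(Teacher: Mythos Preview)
Your proof is correct and follows the same route as the paper: bound $\|T_1^n\|$ polynomially in $n$ via the concavity inequality, deduce $r(T_1)\le 1$ and hence $\omega_0\le 0$, and conclude that $1\in\rho(A)$ so the Cayley transform is well defined. Your concave-sequence argument for the estimate $\|T_1^n\|\le\sqrt{n}\,\|T_1\|$ is in fact more explicit than what the paper writes (it merely states an analogous $\sqrt{n}$-type bound), and your spectral-mapping remark at the end is a clean alternative.
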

\begin{proof}
	This is a very slight adaptation of the proof of Lemma 2.1 in \cite{gallardo-partington}. Let $A$ be the generator of $(T_t)_{t\geq 0}$. The growth bound $\omega$ of $(T_t)_{t\geq 0}$, defined by
	\[ \omega= \inf\left\lbrace w \in \R :\exists M \geqslant 1 \text{ such that } \forall t\geq 0, \|T_t\| \leqslant M e^{w t} \right\rbrace   \]
	is such that, for every $t>	0$, 
		\[\omega = \frac{1}{t}\log (r(T_t)) \]
	where $r(T_t)$ is the spectral radius of $T_t$. Moreover, we have
	\[ \sup\left\lbrace \Re(\lambda) ~ \vert ~ \lambda \in \sigma(A) \right\rbrace \leqslant \omega \]
		(see for instance \cite{EN}, Chap. IV, Section 2, Prop. 2.2). To prove that the cogenerator is well-defined, it suffices to show that $ \omega < 1$ (since then $1 \in \rho (A)$). We show that $r(T_1) \leqslant 1$. This comes from the fact that, since $T_1$ is concave, then, for every $n \in \N^*$,
	\[ \|T_1| \leqslant \sqrt{1 + \left( \|T_1\| + 1 \right) n} \]
	Then, $r(T_1) = \lim\limits_{n \rightarrow + \infty} \|T_1^n\|^\frac{1}{n} \leqslant 1$, and thus $\omega \leqslant 0$, which concludes the proof.
\end{proof}

\begin{lem}\label{equivalence}
	Let $(T_t)_{t\geq 0}$ be a $C_0$-semigroup which has a cogenerator $V$. Let $A$ be its generator. The following assertions are equivalent
	\begin{enumerate}[label=(\roman{*})]
		\item for every $t\geq 0$, $T_t$ is concave;
		\item $\forall x \in \GH$, $\phi_x : t \mapsto \|T_t x\|^2$ is concave;
		\item $\forall y \in D(A^2)$, $\Re \left( \langle A^2 y, y \rangle \right) + \|Ay\|^2 \leqslant 0$;
		\item $V$ is concave.
	\end{enumerate}
\end{lem}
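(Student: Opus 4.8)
I want to prove the equivalence of (i)–(iv) by establishing the cycle (i) $\Leftrightarrow$ (ii), then (ii) $\Leftrightarrow$ (iii), and finally (i) $\Leftrightarrow$ (iv). The first equivalence (i) $\Leftrightarrow$ (ii) is the easiest: concavity of the operator $T_t$ for a fixed $t$ says exactly that $\|T_{2t}x\|^2 + \|x\|^2 \leqslant 2\|T_tx\|^2$ for all $x$, i.e. $\phi_x(2t) + \phi_x(0) \leqslant 2\phi_x(t)$. Since each $\phi_x$ is continuous on $[0,\infty)$ (by strong continuity of the semigroup), midpoint concavity of $t \mapsto \phi_x(t)$ restricted to the statement at every pair $(0,2t)$ is not quite enough; what I really want is that $T_t$ concave \emph{for every} $t$ gives midpoint concavity on every subinterval, because $\|T_{s+2t}x\|^2 + \|T_sx\|^2 \leqslant 2\|T_{s+t}x\|^2$ is just concavity of $T_t$ applied to the vector $T_sx$. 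Midpoint concavity plus continuity gives genuine concavity of $\phi_x$, and conversely concavity of all $\phi_x$ immediately gives concavity of every $T_t$ by reading off the inequality at the triple $(0,t,2t)$.

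\medskip

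For (ii) $\Leftrightarrow$ (iii) I pass to the infinitesimal level. If $y \in D(A^2)$, then $t \mapsto T_ty$ is $C^2$ into $\GH$, so $\phi_y(t) = \langle T_ty, T_ty\rangle$ is $C^2$ with
\[
\phi_y''(t) = 2\,\Re\langle A^2T_ty, T_ty\rangle + 2\|AT_ty\|^2.
\]
Concavity of $\phi_y$ on $[0,\infty)$ is equivalent to $\phi_y'' \leqslant 0$ everywhere, and evaluating at $t=0$ gives (iii); conversely, since $T_t$ maps $D(A^2)$ into itself and commutes with $A$, the inequality $\Re\langle A^2 z, z\rangle + \|Az\|^2 \leqslant 0$ applied to $z = T_ty$ gives $\phi_y''(t) \leqslant 0$ for all $t$, hence $\phi_y$ concave; density of $D(A^2)$ and continuity then upgrade (ii) from $D(A^2)$ to all of $\GH$. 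One technical point to handle carefully: one needs $D(A^2)$ dense, which holds for any $C_0$-semigroup generator, and one needs the differentiation-under-the-inner-product justified, which is standard since $t \mapsto T_ty$ is twice continuously differentiable for $y \in D(A^2)$.

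\medskip

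The remaining equivalence (i) $\Leftrightarrow$ (iv), or equivalently (iii) $\Leftrightarrow$ (iv) once the above is in place, is where the cogenerator genuinely enters, and I expect this to be the main obstacle. The cogenerator is $V = (A+\Id)(A-\Id)^{-1}$, so that $A = (V+\Id)(V-\Id)^{-1}$, and one must translate the condition $V^{*2}V^2 - 2V^*V + \Id \geqslant 0$ into the condition (iii) on $A$. The cleanest route is probably the resolvent/Cayley-transform identity: writing $V = \Id + 2(A-\Id)^{-1}$, one computes $\Id - V^*V$ and $(\Id-V^*V) - (\Id - V)^*(\Id-V)\cdots$ in terms of $(A-\Id)^{-1}$ and expresses the concavity defect $2V^*V - V^{*2}V^2 - \Id$ as $(A^*-\Id)^{-1} M (A-\Id)^{-1}$ for an operator $M$ built from $A^*A$, $A + A^*$, etc.; the positivity of this defect on all of $\GH$ is then equivalent to $M \geqslant 0$ on the range of $(A-\Id)^{-1} = D(A)$, and $M \geqslant 0$ should unwind to precisely $\Re\langle A^2y,y\rangle + \|Ay\|^2 \leqslant 0$ after clearing the resolvents — here one uses that $D(A^2)$ is a core and that a bounded quadratic form inequality extends from a core. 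This computation is purely algebraic but delicate with adjoints and domains; I would model it closely on the corresponding argument for isometries/$2$-isometries in \cite{gallardo-partington}, where the analogous ``$V$ is an isometry iff the semigroup is isometric'' statement is proved, since concavity is the one-sided version of the $2$-isometric identity and the same resolvent bookkeeping applies verbatim with an inequality in place of an equality.
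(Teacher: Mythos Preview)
Your plan matches the paper's proof essentially step for step: the paper proves (i)$\Leftrightarrow$(ii) via midpoint concavity plus continuity, (ii)$\Leftrightarrow$(iii) by differentiating $\phi_y$ twice for $y\in D(A^2)$ and then extending by density, and (iii)$\Leftrightarrow$(iv) by a direct Cayley-transform computation. For the last equivalence the paper does not factor the defect operator abstractly as you suggest but simply substitutes $y=(A-\Id)^{-2}x$ (so the resolvent appears \emph{squared}, not to the first power as in your sketch) and checks the identity
\[
\|V^2x\|^2+\|x\|^2-2\|Vx\|^2=\|(A+\Id)^2y\|^2+\|(A-\Id)^2y\|^2-2\|(A^2-\Id)y\|^2
=8\bigl(\Re\langle A^2y,y\rangle+\|Ay\|^2\bigr),
\]
which gives both directions at once since $(A-\Id)^{-2}$ is a bijection from $\GH$ onto $D(A^2)$.
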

\begin{proof}
The  proof uses similar methods to those   of \cite[Prop. 2.6]{jacob}.  For the sake of completeness we give the details. \\
$(i)\Rightarrow(ii)$: Let $x \in \GH$ and  $\phi_x : t \mapsto \|T_t x\|^2$. We prove that
\[  \forall  t\geq 0, ~ \forall \tau \geqslant 0,~ \phi_x(2\tau + t) + \phi_x(t) \leqslant 2 ~ \phi_x (t+\tau). \]
Let $t\geq 0$ and $\tau \geqslant 0$. Since $T_\tau$ is concave, one has
\[\|T_\tau^2 T_t x\|^2 + \|T_t x\| \leqslant 2 \|T_\tau T_t x\|,  \]
which is the above inequality. Since $\phi_x$ is continuous, it follows that $\phi_x$ is concave.\\
$(ii) \Rightarrow (i)$: Let $t\geq 0$ and $x \in \GH$. Since $t \mapsto \|T_t x\|^2$ is concave, we get
\[ \|T_{\left( \frac{1}{2}0 + \frac{1}{2}2t \right)}x\|^2 \geqslant \frac{1}{2} \left( \|T_0 x\|^2 + \|T_{2t}x\|^2 \right),  \]
that is,
\[ 2 \|T_t x\|^2 \geqslant \|x\|^2 + \|T_t^2 x\|^2 . \]	
\noindent $(ii) \Rightarrow (iii)$: Let $y \in D(A^2)$. Then, the function $\phi_y : t \mapsto \|T_t y\|^2$ is twice differentiable and, for every $t\geq 0$,
\begin{eqnarray*}
	\phi_y^{\prime \prime}(t) & = & \langle A^2 T_t y, T_t y \rangle + 2 \langle A T_t y, A T_t, y \rangle + \langle T_t y, A^2 T_t y \rangle \\
	& = & 2 \left( \Re \left(\langle A^2 T_t y, T_t y \rangle \right) + \|AT_ty\|^2  \right).
\end{eqnarray*}
Taking $t=0$, one gets
\[ \phi_y^{\prime \prime}(0) = 2 \left( \Re \left( \| A^2  y,  y \| \right) + \|Ay\|^2  \right). \]
Since $\phi_y$ is concave, $\phi^{\prime \prime}(0) \leqslant 0$, which gives the result.\\
$(iii) \Rightarrow (ii)$: We prove first that, for every $y \in D(A^2)$, $\phi_y$ is concave. Let $y \in D(A^2)$. Note that $\phi_y$ is twice differentiable. Let $t\geq 0$. Note that $T_t y \in D(A^2)$, and that $\phi_y^{\prime \prime}(t)=\langle A^2 T_t y, T_t y \rangle + 2 \langle A T_t y, A T_t, y \rangle + \langle T_t y, A^2 T_t y \rangle  \leqslant 0$. Hence, $\phi_y$ is concave.

We now prove the result for every $x \in H$. For $x \in H$, we show that $\phi_x : t \mapsto \|T_t x\|^2$ is concave. Let $t,s \in \R^+$. Let $\tau \in [0;1]$. Since $D(A^2)$ is dense in $H$ (in fact, $\bigcap\limits_{n \geqslant 1}D(A^n)$ is dense in $\GH$ (see \cite[Chap. 3, Thm 3.2.1]{staffans}, there exists a sequence $(y_n)_n$ such that, for every $n \in \N$, $y_n \in D(A^2)$ and $y_n \rightarrow x$. However, for every $n \in \N$, $\phi_{y_n}$ is concave so
\[ \phi_{y_n}((1 - \tau)t + \tau s) \geqslant (1 - \tau) \phi_{y_n}(t) + \tau \phi_{y_n}(s)  \]
and hence
\[ \|T_{(1-\tau)t+ \tau s}y_n\|^2 \geqslant (1 - \tau) \|T_t y_n\|^2 + \tau \|T_s y_n\|^2.  \]
Letting $n \rightarrow + \infty$, we get
\[ \|T_{(1-\tau)t+ \tau s}x\|^2 \geqslant (1 - \tau) \|T_t x\|^2 + \tau \|T_s x\|^2, \]
which concludes the proof.\\
$(iii) \Rightarrow (iv)$: Let $x \in H$. Let $y = (A-I)^2x$. Note that $y \in D(A^2)$. Then,
\begin{eqnarray*}
	\|V^2x\|^2 + \|x\|^2 - 2 \|Vx\|^2 & = & \|(A+I)^2y\|^2 + \|(A-I)^2y\|^2 - 2 \|(A^2 - I)y\|^2 \\
	& = & 4 \langle A^2y, y \rangle + 8 \langle Ay, Ay \rangle + 4 \langle y, A^2y \rangle \\
	& = & 8 \left( \Re \langle A^2y, y \rangle + \|Ay\|^2 \right)\\
	& \leqslant & 0.
\end{eqnarray*}
Hence, $V$ is concave.\\
$(iv) \Rightarrow (iii)$: The previous calculation shows that
\[ \forall y \in \im (A^2), ~ \Re \langle A^2y, y \rangle + \|Ay\|^2 \leqslant 0.  \]
However $(A-I)^2$ is a bounded linear operator such that there exists an \emph{a priori} unbounded operator $T$ with dense domain such that $(A-I)^2T = I$. Hence, $ \im (A-I)^2$ is dense in $H$. We then get
\[ \forall y \in H, ~ \Re \langle A^2y, y \rangle + \|Ay\|^2 \leqslant 0,  \]
which concludes the proof.
\end{proof}

The next result is Theorem~3.6 in \cite{shimorin}.
\begin{lem}\label{wold}
	Every concave operator has a Wold-type decomposition. In particular, every pure concave operator has the wandering subspace property.
\end{lem}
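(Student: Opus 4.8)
This is \cite[Theorem~3.6]{shimorin}, and the plan is to reproduce Shimorin's argument. Write $E=\GH\ominus T\GH$ and $[T,E]=\overline{\bigvee_{n\ge0}T^nE}$; recall that ``$T$ has the wandering subspace property'' means $[T,E]=\GH$, and that a \emph{Wold-type decomposition} of $T$ is an orthogonal splitting $\GH=\GH_a\oplus\GH_u$ into a $T$-invariant subspace $\GH_a$ on which $T$ has the wandering subspace property and a $T$-reducing subspace $\GH_u$ on which $T$ acts as a unitary operator. Once such a decomposition is in hand the second assertion is immediate: if $T$ is pure then, since $T|_{\GH_u}$ maps $\GH_u$ onto $\GH_u$, we get $\GH_u=T^n\GH_u\subseteq T^n\GH$ for every $n$, whence $\GH_u\subseteq\bigcap_{n\ge0}T^n\GH=\{0\}$, so $\GH=\GH_a$ and $T=T|_{\GH_a}$ has the wandering subspace property. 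So the task reduces to producing the decomposition for a concave $T$.

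First I would dispose of the elementary reductions. For $x\in\GH$ put $a_n=\|T^nx\|^2\ge0$; concavity says $a_{n+2}-a_{n+1}\le a_{n+1}-a_n$, so the increments are non-increasing in $n$, and since the $a_n$ remain $\ge0$ no increment can be negative --- in particular $\|Tx\|\ge\|x\|$. Thus $T$ is bounded below and expansive, each $T^n\GH$ is closed, the Cauchy dual $T'$ is well defined with $\|T'\|\le1$ (since $(T')^*T'=(T^*T)^{-1}\le\Id$), and $\GH_u:=\bigcap_{n\ge0}T^n\GH$ is a closed subspace. Using the left inverse $L=(T^*T)^{-1}T^*$ one sees $T\GH_u=\GH_u$: if $y\in\GH_u$ then $y\in T^{n+1}\GH$ for all $n$, so its unique preimage $Ly$ lies in every $T^n\GH$, i.e.\ $Ly\in\GH_u$ with $T(Ly)=y$. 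Hence $T|_{\GH_u}$ is invertible and again concave, and running the increment argument over all $n\in\ZZ$ --- where $(a_n)_{n\in\ZZ}$ is now bounded below with non-increasing increments, hence constant --- forces $\|Tx\|=\|x\|$ on $\GH_u$, so $T|_{\GH_u}$ is an invertible isometry, i.e.\ unitary.

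The substantial part, which I would import from \cite{shimorin}, is that the complementary subspace $\GH_a:=\GH\ominus\GH_u$ --- which is $T$-invariant once one knows $\GH_u$ reduces $T$, has wandering space $E$ (since $\GH_u\subseteq T\GH$), and is pure (since $\bigcap_n T^n\GH=\GH_u$) --- carries $T$ as an operator with the wandering subspace property, equivalently $[T,E]=\GH_a$. This, together with the reducing property of $\GH_u$, is the heart of Shimorin's analysis: he establishes an operator-theoretic criterion of ``concavity type'' (verified for $T$ from $2T^*T\ge\Id+T^{*2}T^2$ together with expansivity) under which, in the absence of a residual summand, the iterates $\{T^ne:e\in E\}$ span the whole space; the proof of that criterion is an intricate extremal/iterative argument. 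I expect this last step to be the main obstacle: the preceding reductions are routine, but the passage from the concavity inequality to the wandering subspace property is precisely Shimorin's key contribution and has no short self-contained proof.
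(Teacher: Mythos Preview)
Your proposal is correct and matches the paper's approach: the paper does not prove this lemma at all but simply records it as \cite[Theorem~3.6]{shimorin}, which is exactly the reference you invoke. Your additional sketch of Shimorin's argument (expansivity from concavity, the structure of $\GH_u=\bigcap_n T^n\GH$, unitarity of $T|_{\GH_u}$ via the two-sided increment argument, and the acknowledgement that the wandering subspace property on $\GH_a$ together with the reducing property of $\GH_u$ is the substantive step imported from Shimorin) goes well beyond what the paper provides, and is accurate as an outline.
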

The last step is the following. 
\begin{lem}\label{pure}
	Let $(T_t)_{t\geq 0}$ be a $C_0$-semigroup which has a cogenerator $V$. Assume that $V$ has a Wold-type decomposition. Assume that, for every $t > 0$, $T_t$ is pure. Then, $V$ is pure.
\end{lem}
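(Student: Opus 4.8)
The plan is to prove that $V$ is pure by showing that the residual subspace $\GH_u := \bigcap_{n\ge 0} V^n\GH$ is $\{0\}$ (recall that $V$ is pure exactly when this intersection is trivial). The first step is to unpack what the Wold-type decomposition of $V$ provides: an orthogonal, $V$-reducing splitting $\GH = \GH_u \oplus \GH_s$ in which $\GH_u = \bigcap_{n\ge 0}V^n\GH$ and $V{}_{|\GH_u}$ is \emph{unitary}, while $V{}_{|\GH_s}$ has the wandering subspace property. (If one wishes to start from the weaker information that $V{}_{|\GH_u}$ is merely invertible, it can be upgraded to unitarity using concavity of $V$: for $x\in\GH_u$ the bi-infinite sequence $n\mapsto \|(V{}_{|\GH_u})^n x\|^2$, $n\in\ZZ$, satisfies $a_{n+2}+a_n\le 2a_{n+1}$ and is nonnegative, hence constant, so $V{}_{|\GH_u}$ is isometric and thus unitary.) The two facts to retain are that $\GH_u$ \emph{reduces} $V$ and that $V$ acts as a unitary on it.

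The second step transfers this to the semigroup. Since $\GH_u$ and $\GH_s = \GH_u^\perp$ are both invariant for $V$, they are both invariant for every $T_t$, the invariant subspaces of the cogenerator being exactly the invariant subspaces of the semigroup \cite{fuhrmann}; hence $\GH_u$ reduces each $T_t$. Set $W_t := T_t{}_{|\GH_u}$, a $C_0$-semigroup on $\GH_u$ whose cogenerator is the restriction $V{}_{|\GH_u}$ — a unitary operator that does not have $1$ as an eigenvalue, because $V-\Id = 2(A-\Id)^{-1}$ is injective. Consequently the generator of $(W_t)_{t\ge 0}$ is the Cayley transform $(V{}_{|\GH_u}+\Id)(V{}_{|\GH_u}-\Id)^{-1}$, which is skew-adjoint; by Stone's theorem (this is also in the spirit of \cite[Prop. 8.2]{nagy-foias}), $(W_t)_{t\ge 0}$ is therefore a $C_0$-group of unitary operators. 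In particular every $W_t$ is surjective, so $\bigcap_{n\ge 0}W_t^n\GH_u = \GH_u$ for all $t$.

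For the final step, fix any $t>0$. Because $\GH_u$ reduces $T_t$, one has $T_t^n\GH = W_t^n\GH_u \oplus (T_t{}_{|\GH_s})^n\GH_s$ for every $n$, and therefore $\bigcap_{n\ge 0}T_t^n\GH \supseteq \bigcap_{n\ge 0}W_t^n\GH_u = \GH_u$. Since $T_t$ is pure the left-hand side is $\{0\}$, so $\GH_u=\{0\}$; that is, $\bigcap_{n\ge 0}V^n\GH = \{0\}$ and $V$ is pure.

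The main obstacle is conceptual rather than computational: one must pin down the structure of the Wold-type decomposition precisely enough to know that the residual subspace $\bigcap_n V^n\GH$ both reduces $V$ and carries a unitary — not merely invertible — restriction of $V$, and one must handle carefully the interplay between the cogenerator and the semigroup restricted to a reducing subspace, in particular the standard fact that a unitary cogenerator without the eigenvalue $1$ generates a $C_0$-group of unitaries. Once these are in hand, pushing the intersection through the orthogonal direct sum, and the elementary concavity-of-sequences observation if it is needed, are routine.
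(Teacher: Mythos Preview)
Your argument is correct and follows essentially the same route as the paper's: identify $\GH_u=\bigcap_n V^n\GH$ via the Wold-type decomposition, show it is invariant for each $T_t$, observe that the restricted semigroup has unitary cogenerator and hence (via a skew-adjoint generator) consists of unitaries, and conclude $\GH_u=\{0\}$ from purity of $T_t$. The only notable difference is that you import the invariance of $\GH_u$ under $(T_t)$ from the cogenerator/semigroup invariant-subspace correspondence \cite{fuhrmann}, whereas the paper obtains it directly from the commutation $T_tV=VT_t$ (if $x=V^n y$ then $T_tx=V^nT_ty\in V^n\GH$); the latter is more self-contained here since the Fuhrmann equivalence is stated in the paper for contraction semigroups, and the lemma does not assume contractivity.
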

\begin{proof}
The proof mimics the proof of \cite[Prop. 2.5]{gallardo-partington}. 
The spaces $\GH_1 = \bigcap_{n \geqslant 0} V^n \GH$ and $\GH_2$ defined as the closed linear hull of $\left\lbrace V^n (\GH \ominus V\GH ) \right\rbrace$ are two closed subspaces of $\GH$ invariant by $V$ such that $\GH = \GH_1 \oplus \GH_2$, $U:= \restriction{V}{\GH_1} \in {\mathcal L}(\GH_1)$ is unitary and $S := \restriction{V}{\GH_2} \in {\mathcal L}(\GH_2)$ has the wandering subspace property (this is the Wold decomposition of $V$, see Lemma \ref{wold}). 
We want to prove that $\GH_1 = \left\lbrace 0 \right\rbrace$.\\
	Note that, for every $\tpos$, $T_t$ and $A$ commute. From this, we deduce that, for every $\tpos$, $T_t$ and $V = (A+I)(A-I)^{-1}$ commute. Let $\tpos$. We show that $\GH_1$ is invariant by $T_t$. Let $x \in \GH_1 = \bigcap\limits_{n \geqslant 0} V^n \GH$. Let $n \geqslant 0$. Since $x \in V^n \GH$, there exists $y \in \GH$ such that $x = V^n y$. Then, $T_t x = T_t V^n x = V^n T_t x \in V^n \GH$, which proves that $T_t x \in \GH_1$.\\
	We now consider the semigroup $(\tilde{T_t})_\tpos$ induced by $T_t$ on $\GH_1$. Let $B$ be the generator of $(\tilde{T_t})_\tpos$. Note that $B$ is the restriction of the generator $A$ of $(T_t)_\tpos$ to $\dom (A) \cap \GH_1$ (which is a dense subspace of $\GH_1$, see \cite{EN}, Chapter 2, Section 2). The cogenerator of $(\tilde{T_t})_\tpos$ is $U$. Since $U$ is unitary, $B$ is skew-adjoint (that is, $B^* = - B$), and hence, for every $\tpos$, $\tilde{T_t}$ is unitary. However, for every $\tpos$, $T_t$ is pure. This proves that $\GH_1 = \left\lbrace 0 \right\rbrace$.

\end{proof}

Combining these lemmas with Corollary \ref{cor:semigroups}, we have completed the proof of Theorem
\ref{th:sg-concave}. \\ \fini

\bibliography{biblio-universal}

\end{document}